\documentclass[10pt]{amsart}

\usepackage{amsmath}
\usepackage{amssymb}
\usepackage{bm}
\usepackage{graphicx}
\usepackage{psfrag}
\usepackage{color}
\usepackage{xcolor}
\usepackage{listings}

\definecolor{keywordcolor}{rgb}{0.2, 0.2, 0.75}
\definecolor{stringcolor}{rgb}{0.0, 0.5, 0.0}
\definecolor{commentcolor}{rgb}{0.5, 0.5, 0.5}
\definecolor{backgroundcolor}{rgb}{0.95, 0.95, 0.95}
\definecolor{numbercolor}{rgb}{0.3, 0.3, 0.3}

\definecolor{red}{rgb}{1,0,0}
\definecolor{felix}{rgb}{1,0,0}

\definecolor{pink}{rgb}{1,0,0.4}
\definecolor{darren}{rgb}{1,0,0.4}

\definecolor{blue}{rgb}{0,0,1}
\definecolor{neo}{rgb}{0,0,1}

\usepackage{hyperref}
\definecolor{MITred}{RGB}{163,31,52}
\hypersetup{
  colorlinks=true,
  linkcolor=MITred,
  citecolor=MITred,
  urlcolor=MITred
}
\usepackage{url}
\usepackage{algpseudocode}
\usepackage{fancyhdr}
\usepackage{mathtools}
\usepackage{tikz-cd}
\usepackage{xy}
\xyoption{all}
\usepackage{stmaryrd}
\usepackage{calrsfs}
\usepackage{cleveref}

\theoremstyle{plain}
\newtheorem{thm}{Theorem}[section]
\newtheorem{lem}[thm]{Lemma}
\newtheorem{prop}[thm]{Proposition}
\newtheorem{cor}[thm]{Corollary}

\theoremstyle{definition}
\newtheorem{defn}[thm]{Definition}
\newtheorem{exam}[thm]{Example}

\newtheorem{conj}[thm]{Conjecture}

\theoremstyle{remark}

\numberwithin{equation}{section}

\newcommand{\aaa}{\mathbb{A}}
\newcommand{\cc}{\mathbb{C}}
\newcommand{\ff}{\mathbb{F}}
\newcommand{\nn}{\mathbb{N}}

\newcommand{\pp}{\mathbb{P}}

\newcommand{\qq}{\mathbb{Q}}

\newcommand{\rr}{\mathbb{R}}

\newcommand{\zz}{\mathbb{Z}}

\providecommand\ldb{\llbracket}
\providecommand\rdb{\rrbracket}
\newcommand{\gp}{\mathcal{G}}

\keywords{Bi-UF Positive Conjecture, algebraic monogenic semiring, finitely generated algebraic positive semiring, bi-UF property, half-factoriality, bi-HF property}

\subjclass[2020]{Primary 16Y60, 13F15, 13A05; Secondary 11R09, 13G05}

\begin{document}

\mbox{}
\title{The Bi-UF Positive Conjecture for quadratic monogenic semirings and related progress}

\author{Felix Gotti}
\address{Department of Mathematics\\MIT\\Cambridge, MA 02139}
\email{fgotti@mit.edu}

\author{Omar Graia}
\address{CrowdMath\\Cambridge, MA 02139}
\email{omar.graia2@gmail.com}

\author{Darren Han}
\address{CrowdMath\\Cambridge, MA 02139}
\email{darrenh31415@gmail.com}

\author{Hengrui Liang}
\address{CrowdMath\\Cambridge, MA 02139}
\email{spaceblastxy1@gmail.com}

\date{\today}

\begin{abstract}
  A complex semiring is a subset of the complex plane that is closed under the standard addition and multiplication of complex numbers and contains both $0$ and $1$. A complex semiring $S$ is called a bi-UFS if both its additive monoid $(S,+)$ and its multiplicative monoid $(S\setminus \{1\}, \cdot)$ are unique factorization monoids (UFM). The Bi-UF Positive Conjecture states that $\mathbb{N}_0$ is the only subsemiring of the nonnegative cone of the real line that is a bi-UFS. In this paper, we prove that no simple semiring extension of $\nn_0$ by a quadratic algebraic number is a bi-UFS, identifying a natural class of complex semirings satisfying the statement of the Bi-UF Positive Conjecture. We also identify another class of complex semirings satisfying the statement of the Bi-UF Positive Conjecture. Then we extend the statement of the Bi-UF Positive Conjecture by motivated by a structural theorem we established for semidomains whose additive monoid are finite-rank free commutative monoids. Finally, we consider the bi-HF property, which is a relaxed version of the bi-UF property. We prove that $\nn_0$ is the only positive rational semidomain having the bi-HF property, and we provide two methods to construct bi-HFS complex semirings that are distinct from~$\nn_0$.
\end{abstract}

\bigskip

\maketitle

\bigskip
\section{Introduction}
\label{sec:intro}

The statement of the Fundamental Theorem of Arithmetic (FTA) is central not only in number theory but also in commutative algebra, semigroup theory, and several related fields. In factorization theory we study the arithmetic and atomic structure of algebraic objects governed by weaker versions of the FTA. The statement of the FTA has been conceptualized to cast one of the most relevant classes of integral domains, the class consisting of all unique factorization domains (UFD). More general, we say that a cancellative commutative monoid is factorial or a unique factorization monoid (UFM) if every nonunit can be factored into atoms (i.e., irreducible elements) in a unique manner. A cancellative and commutative monoid is atomic if every nonunit can be factored into finitely many atoms, and we say that an atomic monoid is a half-factorial monoid (HFM) if every two factorizations of the same element have the same number of atoms (counting multiplicity). A subset $S$ of the complex field $\mathbb{C}$ is called a \emph{complex semiring} if it contains $0$ and $1$ and is closed under the usual operations of addition and multiplication. Its additive and multiplicative monoids are denoted by $(S,+)$ and $S^*:=(S \setminus\{0\}, \cdot)$, respectively. A monogenic semiring (of characteristic $0$) is a simple semiring extensions $\nn_0[\rho]$ of the prototypical semiring $\nn_0$ by $\rho \in \cc$:
\[
  \nn_0[\rho]=\{p(\rho):p(x)\in \nn_0[x]\}.
\]
To ease the notation, we let $S_\rho$ denote the monogenic semidomain $\nn_0[\rho]$ and we call $S_\rho$ rational, positive, real, or algebraic according as the generator $\rho$ has the corresponding property. 
\smallskip

The phenomenon of non-unique factorization has been largely investigated in the more general contexts of commutative monoids and, more recently, in the context of semidomains. The interest in the atomic structure, ideal theory, and factorization aspects of monogenic semidomains has significantly increased after the simultaneous appearances in 2019 of the papers~\cite{CF19} by Campanini and Facchini and the paper~\cite{CGG20} by Chapman, Gotti, and Gotti: in~\cite{CF19} the authors study factorization and ideal-theoretical aspects of the transcendental monogenic semidomain $\nn_0[x]$, while in~\cite{CGG20} the authors investigate the factorization and arithmetic of lengths of (the additive structure of) the monogenic rational semidomain. A recent investigation of the multiplicative structure of monogenic semidomain from the lens of factorization has been carried out by Deng, Gotti, and Zeng~\cite{DGZ26}. On the other hand, the additive structure of monogenic semidomains has been considered in several papers recently: see~\cite{ABP21,hP23} for the case of rational generators and~\cite{ABLST23,DDGLPVZ26} for the case of algebraic generators (rational examples had first appeared in~\cite[Section~5]{GG18}).
\smallskip

The following characterization of additive factoriality and additive half-factoriality for algebraic monogenic semidomains is due to Correa-Morris and Gotti, and it plays an essential role through this paper.

\begin{thm} \cite[Theorem~5.4]{CG22} \label{prop:additively-factoriality-characterizations}
  Let $\alpha$ be a positive algebraic number with minimal polynomial $m_\alpha(x) \in \qq[x]$. Then the following conditions are equivalent.
  \begin{enumerate}
    \item[(a)] $(S_\alpha,+)$ is a UFM\@.
      \smallskip

    \item[(b)] $(S_\alpha,+)$ is an HFM\@.
      \smallskip

    \item[(c)] $(S_\alpha,+)$ has $\deg  m_\alpha(x)$ atoms.
    \smallskip

    \item[(d)] $\{\alpha^k : k \in \ldb 0,\deg m_\alpha(x) - 1\rdb \, \}$ is the set of atoms of $(S_\alpha,+)$.
  \end{enumerate}
\end{thm}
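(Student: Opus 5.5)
The plan is to first pin down the general shape of the set of atoms of $(S_\alpha,+)$, and then run the cycle of implications (a)$\Rightarrow$(b)$\Rightarrow$(d)$\Rightarrow$(a), with (c) attached to (d). Since $S_\alpha \subseteq \rr_{\ge 0}$, the monoid $(S_\alpha,+)$ is reduced. It is generated by $\{\alpha^n : n \in \nn_0\}$, so every atom is a power of $\alpha$. The case $\alpha = 1$ (so $S_\alpha = \nn_0$ and $\deg m_\alpha = 1$) is trivial, so I will assume $\alpha \neq 1$; then the powers $\alpha^n$ are pairwise distinct.

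The first structural lemma is that the atoms form an initial segment $\{\alpha^n : 0 \le n < N\}$ with $N \in \nn_0 \cup \{\infty\}$. Suppose $\alpha^{n-1}$ is not an atom, say $\alpha^{n-1} = a + b$ with $a,b \in S_\alpha \setminus\{0\}$. Multiplying by $\alpha$ gives $\alpha^n = \alpha a + \alpha b$, so $\alpha^n$ is not an atom either.

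Given this lemma, (c)$\Leftrightarrow$(d) is immediate, because the atoms are pairwise distinct powers forming an initial segment. The implication (a)$\Rightarrow$(b) is trivial.

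For (b)$\Rightarrow$(d), assume $(S_\alpha,+)$ is an HFM, so in particular it is atomic. Then $1$ must be an atom: otherwise $N = 0$, so there are no atoms at all, which is impossible for the nontrivial atomic monoid $(S_\alpha,+)$. Hence $N \ge 1$. Put $d = \deg m_\alpha$.

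If $N < d$, then $\alpha^N$ is not an atom, so atomicity gives $\alpha^N = \sum_{j<N} c_j \alpha^j$ with $c_j \in \nn_0$. This is a nonzero rational polynomial of degree $N < d$ vanishing at $\alpha$, contradicting the minimality of $m_\alpha$.

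If $N > d$, clear denominators in $m_\alpha$ and split by sign to get $c\, m_\alpha(x) = f(x) - g(x)$. Here $f, g \in \nn_0[x]$ have no common monomials and degree at most $d$, and $f(\alpha) = g(\alpha)$. Since every $\alpha^j$ with $j \le d$ is an atom, $f(\alpha)$ and $g(\alpha)$ are two factorizations of the same element, of lengths $f(1)$ and $g(1)$. Half-factoriality forces $f(1) = g(1)$, so $m_\alpha(1) = 0$. Irreducibility of $m_\alpha$ then gives $\alpha = 1$, which is excluded. Hence $N = d$.

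For (d)$\Rightarrow$(a), note that $1, \alpha, \dots, \alpha^{d-1}$ are $\qq$-linearly independent. So any element has at most one expression as an $\nn_0$-combination of the atoms, and uniqueness of factorizations is automatic. What remains is atomicity: every element of $S_\alpha$, equivalently every power $\alpha^n$ with $n \ge d$, must lie in the $\nn_0$-span of $1, \dots, \alpha^{d-1}$.

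I expect this atomicity step to be the main obstacle.

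For $\alpha > 1$, I would argue by strong induction on $n$. A nontrivial decomposition of the non-atom $\alpha^n$ into generators only uses powers $\alpha^k$ with $\alpha^k < \alpha^n$, that is, $k < n$. Each of these lies in the span by the inductive hypothesis, and there are only finitely many of them.

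For $\alpha < 1$, the descending argument fails, and here I would instead try to show that (d) cannot occur at all. Every nonzero $\nn_0$-combination of $1, \dots, \alpha^{d-1}$ is at least $\alpha^{d-1}$. So the approach is to exhibit a non-atom power of $\alpha$ whose every decomposition must eventually involve smaller and smaller powers, and to derive a contradiction with (d) from this. Equivalently, one can show directly that for $0 < \alpha < 1$ the set of atoms can never be exactly $\{1, \dots, \alpha^{d-1}\}$. The natural tool is again a relation $f(\alpha) = g(\alpha)$ coming from $m_\alpha$, now compared using the size ordering on $\rr$ rather than lengths. I would handle this case carefully, or reduce it via $\alpha \mapsto \alpha^{-1}$ if that proves convenient.
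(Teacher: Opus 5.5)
Most of your argument is sound. This includes the reduction of atoms to powers of $\alpha$, the initial-segment lemma, and (c)$\Leftrightarrow$(d). It also includes both halves of (b)$\Rightarrow$(d): minimality of $m_\alpha$ when $N<d$, and the length comparison forcing $m_\alpha(1)=0$ when $N>d$. Uniqueness in (d)$\Rightarrow$(a) via $\qq$-linear independence and the strong induction giving atomicity when $\alpha>1$ are also fine.

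The genuine gap is the case $0<\alpha<1$ of (d)$\Rightarrow$(a). You correctly observe that here (d) would make $(S_\alpha,+)$ non-atomic, since the non-atom $\alpha^d$ is smaller than every nonzero $\nn_0$-combination of $1,\dots,\alpha^{d-1}$. So the only way out is to prove that (d) never holds when $\alpha<1$. You do not prove this, and the tools you suggest are not the relevant ones. The minimal polynomial plays no role here. Passing to $\alpha^{-1}$ does not help either, because $S_\alpha$ and $S_{\alpha^{-1}}$ can behave completely differently (compare $S_2=\nn_0$ with the antimatter monoid $S_{1/2}$).

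The missing idea is a rescaling lemma: if $0<\alpha<1$ and some power $\alpha^n$ is not an atom, then $1$ is not an atom. To see this, write $\alpha^n=a+b$ with $a,b\in S_\alpha\setminus\{0\}$. Any power $\alpha^k$ occurring with positive coefficient in an expression of $a$ satisfies $\alpha^k\le a<\alpha^n$, hence $k>n$. So $a=\alpha^n a'$ with $a'\in S_\alpha\setminus\{0\}$, and likewise $b=\alpha^n b'$. Cancelling $\alpha^n$ yields $1=a'+b'$.

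Combined with your initial-segment lemma, this shows that for $0<\alpha<1$ the set of atoms of $(S_\alpha,+)$ is either empty or equal to the infinite set $\{\alpha^n : n\in\nn_0\}$. Since (d) prescribes exactly $\deg m_\alpha(x)\ge 1$ atoms, (d) forces $\alpha\ge 1$. Your induction for $\alpha>1$, together with the trivial case $\alpha=1$, then finishes (d)$\Rightarrow$(a). With this lemma inserted, your proof is complete.
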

\smallskip

A complex semiring whose additive and multiplicative monoids are both UFMs (resp., HFMs) is called a bi-UFS (resp., bi-HFS). As the additive and multiplicative monoids of $\nn_0$ are the (additive) free monoid on $\{1\}$ and the free multiplicative monoid on the set of rational primes, respectively, the prototypical semiring $\nn_0$ is a bi-UFS and so a bi-HFS. In their study of classes of bi-atomic semirings, Baeth, Chapman, and the first author~\cite{BCG21} proposed the so-called ``Bi-UF Positive Conjecture", which is one of the primary motivations of this paper.

\begin{conj}[Bi-UF Positive Conjecture]
  The prototypical semiring $\mathbb{N}_0$ is the only positive semiring that is a bi-UFS.
\end{conj}

\smallskip
\subsection{Established Results}

The conjecture is natural because the two factorization structures pull in opposite directions: while additive factoriality makes a positive semiring look like a free commutative monoid, the factoriality of the multiplicative monoid imposes strong divisibility restrictions. In this paper, we identify two classes of positive semirings that satisfy the statement of the Bi-UF Positive Conjecture. We formulate the natural extended version of the the Bi-UF Positive Conjecture to the class of complex semirings. In~\cite{BCG21}, Baeth, Chapman, and Gotti also posed the question of whether~$\nn_0$ is the only complex semiring consisting of nonnegative real numbers that is a bi-HFS, and a counterexample for this question has been provided by Gonzalez, Polo, Rodriguez in~\cite[Example~4.6]{GPR24}. In the second part of this paper, we consider the half-factorial analogue of the UF-Positive Conjecture. First, we quickly argue that $\nn_0$ is the only positive rational semidomain whose additive and multiplicative monoids are both HFMs. Then we devote the section to the construction of two classes of bi-HFS, including a Laurent-polynomial positive semiring distinct from~$\nn_0$.

\medskip
\subsection{The Roadmap}

In Section~\ref{sec:background}, we collect the notation and terminology from factorization theory and semidomain theory needed to make the paper as self-contained as possible for the reader.
\smallskip

In Section~\ref{sec:monogenic semidomains and the Bi-UF Pos Conj}, we prove that Bi-UF Positive Conjecture holds in two classes of positive semidomains, one of them being the class of simple extension semidomains of the form $\nn_0[\alpha]$, where $\alpha$ is a quadratic algebraic number. For this, we use Theorem~\ref{prop:additively-factoriality-characterizations} as it enforces the relation $\alpha^2=m\alpha+n$ with $m,n\in\nn_0$ whenever $(S_\alpha,+)$ is factorial. This relation gives a two-coordinate description of the elements of $S_\alpha$, allowing coefficient comparisons in multiplicative factorizations. We use these comparisons to identify enough multiplicative irreducibles and then construct explicit non-unique factorizations in every possible quadratic case. We conclude Section~\ref{sec:monogenic semidomains and the Bi-UF Pos Conj} proving that the statement of the Bi-UF Positive Conjecture holds also over certain class of monogenic algebraic semidomains.
\smallskip

In Section~\ref{sec:beyond the Bi-UF Pos Conj}, we establish a finite-rank structure theorem: every semidomain whose additive monoid is a free commutative monoid of finite rank is isomorphic to a finite $\nn_0$-span of algebraic numbers inside a number field. Motivated by this result, we extend the Bi-UF Positive Conjecture to the Bi-UF Complex Conjecture (from positive to complex semidomains). The established result allows us to narrow the scope of the Bi-UF Complex Conjecture to complex semidomains with reduced, finite-rank additive monoids. 
\smallskip

Section~\ref{sec:Bi-HF Property} is devoted to the study of the bi-HF property. First, we prove that $\nn_0$ is the only positive rational semidomain that satisfies the bi-HF property (i.e., whose additive and multiplicative monoids are both half-factorial). Then we provide two constructions of bi-HFS distinct from $\nn_0$, one via pullbacks and one via Laurent polynomials.

\bigskip
\section{Background}\label{sec:background}

We collect here the terminology and notation that will be used throughout the paper.

\medskip
\subsection{General Notation} 

Throughout this paper, we let $\zz$, $\qq$, $\rr$, $\aaa$, and $\cc$ denote the set of integers, rational numbers, real numbers, algebraic complex numbers, and complex numbers, respectively. We let $\pp$, $\nn$, and $\nn_0$ denote the set of standard primes, positive integers, and nonnegative integers, respectively. For any $r,s \in \rr$, we set
\[
	\ldb r,s \rdb := \{n \in \zz : r \le n \le s\}.
\]
Observe that $\ldb r,s \rdb$ is empty for any $r,s \in \rr$ with $r > s$. For a subset $S$ of the real line, we set $S_{\ge r} := \{s \in S : s \ge r\}$ and $S_{> r} := \{s \in S : s > r\}$. 
\smallskip

For the sake of simplicity, it is convenient to assume that all monoids we consider in this paper are cancellative and commutative. To make this possible, within the scope of this paper, we define a \emph{monoid} to be a semigroup with an identity element (the usual definition of a monoid) that is also cancellative and commutative.

\medskip
\subsection{Monoids and Factorizations}

Let $M$ be a monoid written multiplicatively. We denote  the group of units of~$M$ by $M^\times$, and we say that $M$ is \emph{reduced} if $M^\times$ is the trivial group. For $r,s \in M$, we say that $s$ \emph{divides} $r$, and write $s \mid_M r$, if $r=st$ for some $t \in M$, while we say that $r$ and $s$ are \emph{associates} if $r \mid_M s$ and $s \mid_M r$. There is an abelian group $\gp(M)$ that satisfies the following property: $\gp(M)$ contains an isomorphic copy of~$M$ and every abelian group containing an isomorphic copy of $M$ also contains an isomorphic copy of $\gp(M)$. The group $\gp(M)$ is often referred to as the \emph{Grothendieck group} of $M$. The \emph{rank} of~$M$ is the rank of the abelian group $\gp(M)$, viewed as a $\zz$-module.
\smallskip

A nonunit $a \in M$ is called an \emph{atom} (or an \emph{irreducible element}) if for any $r,s \in M$ such that $a=rs$ either $r$ or $s$ is a unit. We let $\mathcal{A}(M)$ denote the set of atoms of~$M$. An element of $M$ is called atomic if it is a unit or factors into finitely many irreducibles. Following Cohn~\cite{pC68}, we say that $M$ is atomic if every element of $M$ is atomic. A nonunit $p \in M$ is called a \emph{prime} if for all $r,s \in M$ such that $p \mid_M rs$ either $p \mid_M r$ or $p \mid_M s$. In addition, an element of $M$ is called \emph{factorial} if it is a unit or it factors into finitely many primes. If every element of $M$ is factorial then we call $M$ a \emph{factorial monoid}. One can check that every prime element is an atom, which immediately implies that factorial elements are atomic and therefore every factorial monoid is atomic.
\smallskip

We say that a pair $z := (A,f)$, where $A$ is a finite subset of $\mathcal{A}(M)$ and $f \colon A \to \nn$, is a \emph{factorization} of an element $r \in M$ if $A$ is finite and $\prod_{a \in A} a^{f(a)} = r$. Observe that the monoid $M$ is \emph{atomic} if every nonunit of $M$ admits a factorization. If every nonunit of $M$ admits exactly one factorization then we say that $M$ is a \emph{unique factorization monoid} (UFM). It is not hard to show that the notions of a factorial monoid and a UFM are equivalent, although here we prefer the term ``factorial monoid'' for brevity, especially when using the modifier ``additively''. Given a factorization $z := (A,f)$ of an element of $M$, we set
\[
  |z| := \sum_{a \in A} f(a)
\]
and call $|z|$ the \emph{length} of~$z$. We say that the monoid $M$ is \emph{half-factorial} or a \emph{half-factorial monoid} (HFM) if $M$ is atomic and any two factorizations of the same element of~$M$ have the same length. Observe that every factorial monoid is half-factorial.

\medskip
\subsection{Semidomains}

An additively written monoid $S$ endowed with a multiplicative binary operation is called a \emph{semiring} if the following two conditions hold:
\begin{itemize}
    \item there exists $1 \in S$ such that $1s = s1 = s$ for all $s \in S$ and
    \smallskip
    \item the multiplicative operation distributes over the additive operation.
\end{itemize}
The element $1$ in a semiring is unique and called the \emph{identity element}. Throughout this paper, we tacitly assume that every semiring $S$ is commutative, which means that $rs = sr$ for all $r,s \in S$.
\smallskip

Let $T$ be a semiring. Then the additive monoid we obtain from $T$ by ignoring its multiplication is denoted by $(T,+)$ and called the \emph{additive monoid} of $T$. A subset $S$ of $T$ is called a \emph{subsemiring} of~$T$ if $S$ is closed under both operations of $T$ and contains the elements $0$ and $1$. Let us introduce the notion of a semidomain, the central algebraic structure of this paper.

\begin{defn}
    A subsemiring of an integral domain is called a \emph{semidomain}.
\end{defn}
\noindent Thus, semidomains form a superclass of that consisting of all integral domains. The subsemirings $\nn_0$ and $\nn_0[x]$ of $\zz$ and $\zz[x]$ are not integral domains but play a central role in this paper.
\smallskip

Let $S$ be a semidomain. The subset $S \setminus \{0\}$ of $S$ is a monoid under multiplication: $S$ is denoted by $S^*$ and called the \emph{multiplicative monoid} of~$S$. The \emph{additive rank} of $S$ refers to the rank of its additive monoid. Let us adapt the algebraic properties we introduced earlier for monoids to the setting of semidomains. We say that $S$ is \emph{reduced} (resp., \emph{atomic}, \emph{factorial}, \emph{half-factorial}) if its multiplicative monoid is reduced (resp., atomic, factorial, half-factorial), while we say that $S$ is \emph{additively reduced} (resp., \emph{additively atomic, additively factorial, additively half-factorial}) if its additive monoid is reduced (resp., atomic, factorial, half-factorial). A factorial semidomain is also called a \emph{unique factorization semidomain}. 
We introduce the bi-UF and the bi-HF properties, which are the most relevant properties in the scope of this paper.
\begin{defn}
    Let $S$ be a semidomain. 
    \begin{itemize}
        \item $S$ is called a \emph{bi-UFS} if $S$ is factorial and additively factorial.
        \smallskip

        \item $S$ is called a \emph{bi-HFS} if $S$ is half-factorial and additively half-factorial.
    \end{itemize}
\end{defn}
\noindent When a semidomain is a bi-UFS (resp., bi-HFS), we also say that it satisfies the bi-UF (resp., bi-HF) property.
\smallskip

A subsemiring of the field $\mathbb{C}$ is called a \emph{complex semidomain}. A complex semidomain $S$ is called \emph{algebraic} (resp., \emph{positive}, \emph{rational}) provided that $S$ consists of algebraic (resp., positive, rational) numbers. 
\smallskip

The subsemiring $\mathbb{S}$ of $S$ generated by $1$ is called the \emph{prime} subsemiring of $S$. Observe that $\mathbb{S} = \mathbb{N}_0 \cdot 1$ and also that $\mathbb{S}$ is contained in every subsemiring of $S$. One can readily check that $\mathbb{S}$ is either the prototypical semidomain $\mathbb{N}_0$ or the field $\ff_p$ for some rational prime $p$. As $\cc$ has characteristic $0$, the prime subsemiring of any complex semidomain is $\mathbb{N}_0$. 
\smallskip

Monogenic and finitely generated semidomains play a relevant role in this paper. Assume that the semidomain $S$ is contained in an ambient field $\ff$. Then $S$ is called \emph{finitely generated} if $S$ can be generated by finitely many elements over its prime subsemiring $\mathbb{S}$, which means that there exist $\alpha_1, \dots, \alpha_n \in \ff$ such that $S = \mathbb{S}[\alpha_1, \dots, \alpha_n]$. Monogenic semidomains are the most special case of finitely generated semidomains: $S$ is called \emph{monogenic} if $S$ has the form $\mathbb{S}[\alpha]$ for some $\alpha \in \ff$. For each $\alpha \in \cc$, the \emph{monogenic semidomain generated by} $\alpha$ is
\[
    S_\rho := \nn_0[\rho] = \{p(\rho) : p(x) \in \nn_0[x]\}.
\]
When $\rho$ is algebraic, $m_\rho(x)$ denotes its minimal polynomial over $\qq$. If $d=\deg m_\rho(x)$ and $(S_\rho,+)$ is a free commutative monoid with basis $1,\rho,\dots,\rho^{d-1}$ then every element of $S_\rho$ has a unique additive expression as a nonnegative integer combination of these powers. In particular, in the quadratic additively factorial case, Proposition~\ref{prop:additively-factoriality-characterizations} gives $\mathcal{A}((S_\alpha,+))=\{1,\alpha\}$, so every element of $S_\alpha$ has a unique expression $c+d\alpha$ with $c,d \in \nn_0$.


\bigskip
\section{Finitely Generated Semidomains and the Bi-UF Positive Conjecture}
\label{sec:monogenic semidomains and the Bi-UF Pos Conj}

In this section, we prove that for any positive quadratic algebraic number $\alpha$, the monogenic semidomain~$S_\alpha$ does not satisfy the bi-UF property.

\medskip
\subsection{The Class of Quadratic Monogenic Semidomains}
\label{sec:quadratic monogenic semidomains}

We start by proving the following simple lemma.

\begin{lem}\label{lem:two-identities-from-a-bc}
  Let $\alpha$ be a positive quadratic algebraic number with minimal polynomial $m_\alpha(x) = x^2 - mx - n \in \qq[x]$. If $S_\alpha$ is additively factorial then the following statements hold.
  \begin{enumerate}
    \item $m,n \in \nn_0$ with $n \neq 0$.
      \smallskip

    \item If $d\alpha + c = (d_1\alpha + c_1)(d_2\alpha + c_2)$ for some $c,d \in \nn_0$ and $c_1, c_2, d_1, d_2 \in \nn_0$ then
      \begin{itemize}
        \item $d=md_1d_2+d_1c_2+d_2c_1$ and
          \smallskip

        \item $c=nd_1d_2+c_1c_2$.
      \end{itemize}
  \end{enumerate}
\end{lem}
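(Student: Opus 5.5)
Part (1) comes from Theorem~\ref{prop:additively-factoriality-characterizations}, and part (2) from expanding the product and comparing coefficients.

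\textbf{Part (1).} Since $\deg m_\alpha(x) = 2$ and $S_\alpha$ is additively factorial, condition (d) of Theorem~\ref{prop:additively-factoriality-characterizations} gives $\mathcal{A}((S_\alpha,+)) = \{1,\alpha\}$. As $(S_\alpha,+)$ is atomic (indeed factorial), the element $\alpha^2 \in S_\alpha$ is an $\nn_0$-combination of its atoms. So there are $a,b \in \nn_0$ with $\alpha^2 = a\alpha + b$.

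Then $\alpha$ is a root of $x^2 - ax - b \in \qq[x]$. This monic quadratic is divisible by the minimal polynomial $m_\alpha(x)$, which also has degree $2$, so the two coincide. Hence $m = a \in \nn_0$ and $n = b \in \nn_0$.

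To see $n \neq 0$: if $n = 0$, then $m_\alpha(x) = x^2 - mx = x(x-m)$ would be reducible over $\qq$, contradicting minimality. Equivalently, $\alpha$ would then be the rational number $m$ (recall $\alpha > 0$), contradicting $\deg m_\alpha = 2$.

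\textbf{Part (2).} Expand the product and substitute $\alpha^2 = m\alpha + n$:
\[
(d_1\alpha + c_1)(d_2\alpha + c_2) = d_1d_2(m\alpha + n) + (d_1c_2 + d_2c_1)\alpha + c_1c_2 .
\]
Collecting terms, this equals $(md_1d_2 + d_1c_2 + d_2c_1)\alpha + (nd_1d_2 + c_1c_2)$. Both coefficients lie in $\nn_0$ by part (1), so this is a second expression of the same element as an $\nn_0$-combination of $1$ and $\alpha$.

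Since $(S_\alpha,+)$ is a UFM with atoms $\{1,\alpha\}$, each element has a unique such expression. Alternatively, $1$ and $\alpha$ are linearly independent over $\qq$ because $\alpha$ is irrational. Comparing with $d\alpha + c$ therefore yields the two stated identities.

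The only point needing any care is invoking uniqueness, or $\qq$-linear independence, to compare coefficients. There is no real obstacle: the substance lies in Theorem~\ref{prop:additively-factoriality-characterizations}.
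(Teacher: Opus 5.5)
Your proof is correct and follows the paper's argument essentially verbatim: Theorem~\ref{prop:additively-factoriality-characterizations} gives $\alpha^2 \in \nn_0 + \nn_0\alpha$, uniqueness of the minimal polynomial then identifies $m$ and $n$, and part (2) is the same expansion followed by a comparison of coefficients. You are slightly more careful than the paper, which writes $n \in \nn$ and compares coefficients without comment. You justify $n \neq 0$ via the irreducibility of $m_\alpha(x)$, and you justify the coefficient comparison via uniqueness of additive factorizations or the $\qq$-linear independence of $1$ and $\alpha$.
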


\begin{proof}
  (1) Since $S_\alpha$ is additively factorial, Proposition~\ref{prop:additively-factoriality-characterizations} ensures that the set of additive atoms of~$S_\alpha$ is $\{1,\alpha\}$, whence $\alpha^2 = m'\alpha + n'$ for some $m',n' \in \nn_0$. Thus, $\alpha$ is a root of $x^2 - m'x - n' \in \zz[x]$ and so the uniqueness of $m_\alpha(x)$ ensures that $m = m' \in \nn_0$ and $n = n' \in \nn$.
  \smallskip

  (2) Observe that $\alpha^2=m\alpha+n$ because $\alpha$ is a root of $x^2 - mx - n$. Therefore
  \begin{align*}
    d\alpha+c
    &= (d_1\alpha+c_1)(d_2\alpha+c_2) = d_1d_2\alpha^2+(d_1c_2+d_2c_1)\alpha+c_1c_2 \\
    &= d_1d_2(m\alpha+n)+(d_1c_2+d_2c_1)\alpha+c_1c_2 = (md_1d_2+d_1c_2+d_2c_1)\alpha+(nd_1d_2+c_1c_2).
  \end{align*}
  As a consequence, we obtain the desired expressions for $d$ and $c$:
  \[
    d=md_1d_2+d_1c_2+d_2c_1
    \quad \text{and} \quad
    c=nd_1d_2+c_1c_2.
  \]
\end{proof}

We first verify that the monogenic semidomain $S_\alpha$ is reduced if $S_{\alpha}$ is additively factorial.

\begin{lem}\label{lem:Salpha-reduced-if-additively-factorial}
  Let $\alpha$ be a quadratic algebraic number. If $S_\alpha$ is additively factorial then $S_\alpha$ is reduced.
\end{lem}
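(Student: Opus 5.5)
The approach is to write a putative unit $u$ of $S_\alpha^*$ and its inverse in the additive basis $\{1,\alpha\}$, multiply them out using Lemma~\ref{lem:two-identities-from-a-bc}(2), and compare coordinates. The identity $uv=1$ then becomes a pair of equations in nonnegative integers, and these force $u=v=1$.

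First, I would fix the coordinates. Since $S_\alpha$ is additively factorial, Theorem~\ref{prop:additively-factoriality-characterizations} gives $\mathcal{A}((S_\alpha,+))=\{1,\alpha\}$. By Lemma~\ref{lem:two-identities-from-a-bc}(1), we have $\alpha^2=m\alpha+n$ with $m\in\nn_0$ and $n\in\nn$, and every element of $S_\alpha$ has a unique expression $c+d\alpha$ with $c,d\in\nn_0$. This uniqueness also follows from the linear independence of $1$ and $\alpha$ over $\qq$, since $\alpha$ is quadratic.

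If positivity of $\alpha$ is not assumed in this lemma, I would recover the same setup directly. A free commutative monoid is reduced, and its atoms form its unique minimal generating set. Since $(S_\alpha,+)$ is generated by the powers $\alpha^k$, its atoms are among these powers. Moreover, the rank of $(S_\alpha,+)$ equals the rank of $\zz[\alpha]$, which is $2$. The atoms $\alpha^k$ cannot be only $1$, so they must be exactly $1$ and $\alpha$ (this is the step most in need of care). Then $\alpha^2=m\alpha+n$ with $m,n\in\nn_0$, and $n\neq 0$ because otherwise $\alpha\in\{0,m\}$ would be rational.

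Next, let $u,v\in S_\alpha^*$ satisfy $uv=1$. Write $u=c_1+d_1\alpha$ and $v=c_2+d_2\alpha$. Applying Lemma~\ref{lem:two-identities-from-a-bc}(2) to $1=0\cdot\alpha+1$ and using uniqueness of coordinates yields
\[
md_1d_2+d_1c_2+d_2c_1=0 \quad\text{and}\quad nd_1d_2+c_1c_2=1.
\]
I would then split into two cases according to whether $d_1d_2$ is zero.

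Suppose $d_1d_2\ge 1$. Since $n\ge 1$, the second equation forces $n=d_1=d_2=1$ and $c_1c_2=0$. The first equation then gives $m+c_1+c_2=0$, so $u=v=\alpha$ and $\alpha^2=1$. This makes $\alpha$ rational, contradicting that it is quadratic.

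Suppose instead $d_1d_2=0$. Then $c_1c_2=1$, so $c_1=c_2=1$, and the first equation becomes $d_1+d_2=0$. Hence $d_1=d_2=0$ and $u=1$, so $S_\alpha$ is reduced.

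The only step I expect to require any care is establishing $n\ge 1$ together with the basis $\{1,\alpha\}$ when positivity is not given. The remaining coefficient comparison is routine.
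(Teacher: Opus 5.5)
Your main argument is correct and is essentially the paper's: write $1=(c_1+d_1\alpha)(c_2+d_2\alpha)$ in the additive basis $\{1,\alpha\}$ given by Theorem~\ref{prop:additively-factoriality-characterizations}, then compare coordinates. Your separate case $d_1d_2\ge 1$ actually fills a small hole in the paper's write-up. The paper passes from $ad+bc+mbd=0$ directly to $b=d=0$. When $m=0$ this does not follow (take $a=c=0$, $b=d=1$). Excluding that case requires exactly what you do: use the constant coefficient $ac+bdn=1$ together with $\alpha^2\neq 1$.

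The paragraph where you try to drop positivity, however, fails at its first step and cannot be repaired, because the lemma is false without positivity. Additive factoriality does not make $(S_\alpha,+)$ free. A factorial monoid may have nontrivial units, and under the paper's definitions a group is vacuously factorial. For instance, take $\alpha=i$. Then $-1=i^2\in S_i$, so $S_i=\zz[i]$. Its additive monoid is a group, hence additively factorial, while $-1$ and $i$ are nontrivial units of $S_i^*$. A real example is $\alpha=-2+\sqrt{2}$, a root of $x^2+4x+2$. There $1+(1+4\alpha+\alpha^2)=0$ gives $-1\in S_\alpha$, so again $S_\alpha=\zz[\alpha]$.

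So positivity is a genuine hypothesis. It is implicit in the paper because both Theorem~\ref{prop:additively-factoriality-characterizations} and Lemma~\ref{lem:two-identities-from-a-bc} are stated for positive $\alpha$. You should assume it explicitly and delete that paragraph. Separately, even when $(S_\alpha,+)$ is reduced, the step ``the atoms cannot be only $1$, so they must be exactly $1$ and $\alpha$'' is not an argument. A clean way to go beyond positive generators is this: if $\alpha$ has a positive real conjugate $\beta$, the field isomorphism $\qq(\alpha)\to\qq(\beta)$ carries $S_\alpha$ onto $S_\beta$, which reduces to the positive case.
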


\begin{proof}
  Let $\alpha$ be a quadratic algebraic number such that $S_\alpha$ is additively factorial. It follows from~\cite[Theorem~5.4]{CG22} that $\alpha^2 \in \nn_0 + \nn_0 \alpha$. Thus, we can write $\alpha^2 = m\alpha + n$ for some $m,n \in \nn_0$ with $n \neq 0$. To argue that $1$ is the only unit of $S_\alpha$, write $1 = (a+b\alpha)(c+d\alpha)$ for some $a,b,c,d \in \nn_0$. Then
  \[
    1 = ac + (ad + bc)\alpha + bd(m\alpha + n) = (ad + bc + bdm)\alpha + (ac + bdn).
  \]
  After comparing the coefficients of $\alpha$, we obtain that $ad+bc+mbd=0$. Therefore both equalities $b=d=0$ and $ac=1$ must hold, from which we conclude that $a + b\alpha = c + d \alpha = 1$. Hence $1$ is the only unit of $S_\alpha$.
\end{proof}

Next we prove that for any $p \in \pp$, the multiplicative monoid $S_{\sqrt{p}}^*$ is not factorial.

\begin{prop}\label{prop:sqrt-p-not-UF}
  The semidomain $S_{\sqrt{p}}$ is not factorial for any $p \in \pp$.
\end{prop}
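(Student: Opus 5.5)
The plan is to exhibit an atom of $S_{\sqrt{p}}^*$ that is not prime. That suffices: in a factorial monoid every atom is a product of primes, hence is itself prime (up to a unit, and $S_{\sqrt{p}}$ is reduced by Lemma~\ref{lem:Salpha-reduced-if-additively-factorial}). Here $\sqrt{p}^{\,2}=p$, so $(S_{\sqrt{p}},+)$ is free on $\{1,\sqrt{p}\}$. Every element is uniquely $c+d\sqrt{p}$ with $c,d\in\nn_0$, and by Lemma~\ref{lem:two-identities-from-a-bc} (with $m=0$, $n=p$) products are computed by
\[
(c_1+d_1\sqrt{p})(c_2+d_2\sqrt{p})=(c_1c_2+pd_1d_2)+(c_1d_2+c_2d_1)\sqrt{p}.
\]

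\textbf{Step 1: rational primes are atoms.} Let $q\in\pp$ and suppose $q=(a+b\sqrt{p})(c+d\sqrt{p})$. Comparing $\sqrt{p}$-coefficients gives $ad+bc=0$, so $ad=bc=0$. If $b=d=0$, then $ac=q$ and one factor is $1$. Otherwise, up to symmetry, $b\neq 0$, so $c=0$, which forces $d\neq 0$ and then $a=0$. The product is then $bd\,p$, so $bdp=q$, which forces $q=p$ and $b=d=1$. Hence every rational prime $q\neq p$ is an atom of $S_{\sqrt{p}}^*$. Also, $q\mid_{S}(c+d\sqrt{p})$ holds exactly when $q\mid c$ and $q\mid d$, by uniqueness of coordinates.

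\textbf{Step 2: an explicit product divisible by such a $q$.} I take $k=p+1$ and use the conjugate identity $(k+\sqrt{p})(k-\sqrt{p})=k^2-p$. I multiply the negative factor by $1+\sqrt{p}$ to obtain an element with nonnegative coefficients. A direct computation with the product formula above gives
\[
(p+1+\sqrt{p})(1+p\sqrt{p})=(p^2+p+1)(1+\sqrt{p}).
\]
Choose a prime $q\mid p^2+p+1$. Since $p^2+p+1\equiv 1 \pmod p$, we have $q\neq p$, so $q$ is an atom by Step~1. Now $q$ divides the right-hand side in $S_{\sqrt{p}}$. However, $q$ divides neither $p+1+\sqrt{p}$ nor $1+p\sqrt{p}$, because each has a coordinate equal to $1$. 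Therefore $q$ is an atom that is not prime, and $S_{\sqrt{p}}^*$ is not factorial.

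\textbf{Main obstacle.} The only real difficulty is finding a non-prime atom uniformly in $p$. The obvious candidate $\sqrt{p}$ fails: it is actually prime, since $\sqrt{p}\mid_S c+d\sqrt{p}$ iff $p\mid c$. The way around this is to take a conjugate factorization in $\zz[\sqrt{p}]$ and clear the negative coefficient by multiplying by $1+\sqrt{p}$; with $k=p+1$ this works uniformly in $p$. The rest is routine coefficient comparison.
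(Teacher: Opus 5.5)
Your proof is correct, and it takes a different route from the paper's.

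The paper splits into the cases $p=2$ and $p$ odd. In each case it exhibits two factorizations of a single element and checks that every factor involved is an atom: $7$, $1+\sqrt2$, $3+\sqrt2$, $1+2\sqrt2$ in the first case, and $2$, $1+\sqrt p$, $\tfrac{p+1}{2}+\sqrt p$ in the second. Uniqueness of factorization then fails directly.

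You instead use the single identity $(p+1+\sqrt p)(1+p\sqrt p)=(p^2+p+1)(1+\sqrt p)$. For $p=2$ this is exactly the paper's identity $(3+\sqrt2)(1+2\sqrt2)=7(1+\sqrt2)$. You then conclude via the fact that atoms of a factorial monoid are prime. The paper uses that same mechanism later, in Theorem~\ref{thm:multiplicative obstruction for another class of positive semidomains}.

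Your route gains two things:
\begin{itemize}
\item \textbf{Uniformity in $p$.} There is no parity split, since $p^2+p+1\equiv 1 \pmod p$ always supplies a prime divisor $q\neq p$.
\item \textbf{Economy.} The only atomicity check you need is that a rational prime $q\neq p$ is an atom. The fact that $q$ divides neither $p+1+\sqrt p$ nor $1+p\sqrt p$ in $S_{\sqrt p}$ is read off from coordinates, so you never need to know whether those factors, or $1+\sqrt p$, are atoms.
\end{itemize}
In exchange, the paper's route produces an explicit pair of distinct factorizations into atoms, which yours does not.
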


\begin{proof}
  Fix a rational prime $p$ and set $\alpha := \sqrt{p}$. As the minimal polynomial of $\alpha$ is $x^2 - p$, it follows from Proposition~\ref{prop:additively-factoriality-characterizations} that $S_\alpha$ is additively factorial and that
  \[
    S_\alpha = \{m+n\sqrt{p} : m,n \in \nn_0\}.
  \]
  We will show that $S_\alpha$ is not factorial. Notice that $S_\alpha$ is a reduced semidomain in light of Lemma~\ref{lem:Salpha-reduced-if-additively-factorial}. Since $m_\alpha(x)=x^2-p=x^2-0x-p$, Lemma~\ref{lem:two-identities-from-a-bc} gives that, whenever
  \[
    x+y\sqrt{p}=(a+b\sqrt{p})(c+d\sqrt{p})
  \]
  for some $a,b,c,d \in \nn_0$, one has
  \[
    ac+pbd = x \quad \text{and} \quad ad+bc = y.
  \]
  We split the rest of the proof into two cases, based on the parity of the prime $p$.
  \smallskip

  \textsc{Case 1:} $p=2$. In this case, we can write
  \[
    7+7\sqrt{2}=7(1+\sqrt{2})=(3+\sqrt{2})(1+2\sqrt{2}).
  \]
  The preceding identities show that $7$, $1+\sqrt{2}$, $3+\sqrt{2}$, and $1+2\sqrt{2}$ are atoms. Indeed, for $7$, the equations $ad+bc=0$ and $ac+2bd=7$ force $b=d=0$ and $ac=7$, whence one factor is $1$. For $1+\sqrt{2}$, the equation $ac+2bd=1$ forces $bd=0$ and $a=c=1$, whence $b+d=1$. For $3+\sqrt{2}$, the equation $ad+bc=1$ leaves only the cases $(ad,bc)=(1,0)$ and $(0,1)$, both of which force one factor to be $1$. Finally, for $1+2\sqrt{2}$, the equation $ac+2bd=1$ forces $bd=0$ and $a=c=1$, whence $b+d=2$; because $bd=0$, one factor is $1$.
  \smallskip

  \textsc{Case 2:} $p$ is odd. In this case, we can write
  \begin{equation} \label{eq:a-non-factorial-element-odd-prime}
    (1+\sqrt{p})^2=(p+1)+2\sqrt{p}=2\left(\frac{p+1}{2}+\sqrt{p}\right).
  \end{equation}
  Again the identities above show that all factors in~\eqref{eq:a-non-factorial-element-odd-prime} are atoms. Indeed, $2$ is an atom because $ad+bc=0$ and $ac+pbd=2$ force $b=d=0$ and then $ac=2$. The element $1+\sqrt{p}$ is an atom because $ac+pbd=1$ forces $bd=0$ and $a=c=1$, whence $b+d=1$. Finally, if $\frac{p+1}{2}+\sqrt{p}$ factors nontrivially, then $ad+bc=1$; the cases $(ad,bc)=(1,0)$ and $(0,1)$ force one factor to be $1$, since $p \nmid (p+1)/2$.

  As a consequence, for every rational prime $p$, we have produced two factorizations into atoms with different factors. This, along with the fact that the group of units of $S_\alpha$ is trivial, implies that these factorizations are genuinely distinct. Hence $S_\alpha$ is not factorial.
\end{proof}

As a preparation for the proof of the main theorem, we prove the next two propositions.

\begin{prop}\label{prop:atoms-for-quadratic-UF-a}
  Let $\alpha$ be a positive quadratic algebraic number such that $S_\alpha$ is additively factorial. For any $k \in \nn_0$, the element $\alpha + k \in S_\alpha$ is an atom of $S_\alpha^*$ provided that neither of the following conditions holds:
  \begin{itemize}
    \item $\alpha^2 = \alpha + k$;
      \smallskip

    \item $\alpha^2 = n$ for some $n \in \nn$ such that $n \mid k$.
  \end{itemize}
\end{prop}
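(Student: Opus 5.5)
Write $\alpha^2 = m\alpha + n$ with $m,n \in \nn_0$ and $n \ge 1$; this is possible by Lemma~\ref{lem:two-identities-from-a-bc}(1). By Lemma~\ref{lem:Salpha-reduced-if-additively-factorial}, $S_\alpha$ is reduced, and $\alpha + k \ne 1$ because $\{1,\alpha\}$ is an additive basis, so $\alpha+k$ is a nonunit. It therefore suffices to show that every factorization $\alpha + k = (d_1\alpha + c_1)(d_2\alpha + c_2)$ with $c_i,d_i \in \nn_0$ has one factor equal to $1$. Lemma~\ref{lem:two-identities-from-a-bc}(2), applied with $d = 1$ and $c = k$, gives the two identities
\[
  1 = md_1d_2 + d_1c_2 + d_2c_1 \quad \text{and} \quad k = nd_1d_2 + c_1c_2.
\]
The plan is to enumerate the solutions of the first identity, which is a sum of three nonnegative integers equal to $1$; exactly one of the three summands equals $1$.

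\textsc{Case A:} $d_1c_2 = 1$ and the other two summands vanish. Then $d_1 = c_2 = 1$. Since $d_2c_1 = 0$ and $md_2 = md_1d_2 = 0$, we consider two subcases. If $d_2 = 0$, the second factor is $c_2 = 1$, which is trivial. If $d_2 \ge 1$, then $c_1 = 0$ and $m = 0$, so the second identity reads $k = nd_2 + 0 = nd_2$. Thus $\alpha^2 = n$ with $n \mid k$, which the second excluded condition rules out. Case $d_2c_1 = 1$ is symmetric.

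\textsc{Case B:} $md_1d_2 = 1$, so $m = d_1 = d_2 = 1$. Then $d_1c_2 + d_2c_1 = c_1 + c_2 = 0$, whence $c_1 = c_2 = 0$, and the second identity gives $k = n$. Hence $\alpha^2 = \alpha + k$, which the first excluded condition rules out.

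Since every case either yields a trivial factor or forces one of the excluded conditions, $\alpha + k$ is an atom. I expect no serious obstacle here. The only delicate point is making the subcase $d_2 \ge 1$ in Case A airtight: we must deduce $m = 0$ from $md_1d_2 = 0$ with $d_1d_2 \ne 0$, and we must use $c_1 = 0$ to collapse the second identity to $k = nd_2$, so that $n \mid k$.
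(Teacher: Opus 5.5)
Your proposal is correct and follows the paper's proof essentially step for step: the same coefficient identities from Lemma~\ref{lem:two-identities-from-a-bc} with $d=1$, $c=k$, the same split according to which of the three summands $md_1d_2$, $d_1c_2$, $d_2c_1$ equals $1$, and the same subcases $d_2=0$ versus $d_2\ge 1$. Your explicit check that $\alpha+k$ is a nonunit (via reducedness and $\alpha+k\neq 1$) is a small point the paper leaves implicit, and it is worth keeping.
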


\begin{proof}
  Write $m_\alpha(x) = x^2-mx-n$ for some $m,n \in \qq$. By Lemma~\ref{lem:two-identities-from-a-bc}, we have $m,n \in \nn_0$, with $n \neq 0$, and we may use the coefficient identities from that lemma. Suppose that $\alpha+k$ admits a factorization in $S_\alpha$, say
  \[
    \alpha+k=(d_1\alpha+c_1)(d_2\alpha+c_2)
  \]
  for some $c_1,c_2,d_1,d_2 \in \nn_0$. Applying Lemma~\ref{lem:two-identities-from-a-bc} with $d=1$ and $c=k$ gives
  \begin{equation} \label{eq:atom-alpha-plus-k-alpha-coefficient}
    1 = md_1d_2+d_1c_2+d_2c_1 \quad \text{and} \quad k = nd_1d_2+c_1c_2.
  \end{equation}
  Since all three summands in the first equality are nonnegative integers, exactly one of them is equal to $1$ and the other two are equal to~$0$.

  If $md_1d_2=1$, then $m=d_1=d_2=1$, while $d_1c_2=d_2c_1=0$ forces $c_1=c_2=0$. Hence the displayed factorization is $\alpha+k=\alpha^2$, which is the first excluded case.

  It remains to consider the case in which $d_1c_2=1$ or $d_2c_1=1$. By symmetry, assume that $d_1c_2=1$. Then $d_1=c_2=1$, and the vanishing of the other two summands gives $d_2c_1=0$ and $md_2=0$. If $d_2=0$ then the second factor is $d_2\alpha+c_2=1$. If $d_2>0$ then $m=0$ and $c_1=0$, so the second equality in~\eqref{eq:atom-alpha-plus-k-alpha-coefficient} gives $k=nd_2$. In this case, $\alpha^2=n$ and $n \mid k$, which is the second excluded case.

  As a result, if neither exceptional condition in the statement holds then every factorization of $\alpha+k$ in $S_\alpha$ has a factor equal to~$1$. Hence $\alpha+k$ is an atom of $S_\alpha$.
\end{proof}

\begin{prop}\label{prop:prime-numbers-are-atoms-for-quadratic-UF-a}
  Let $\alpha$ be a positive quadratic algebraic number such that $S_\alpha$ is additively factorial. Then every $p \in \pp$ is an atom of $S_\alpha$ unless $\alpha = \sqrt{p}$.
\end{prop}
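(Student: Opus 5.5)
We argue through the coefficient identities of Lemma~\ref{lem:two-identities-from-a-bc}, just as in the proof of Proposition~\ref{prop:atoms-for-quadratic-UF-a}. Write $m_\alpha(x) = x^2 - mx - n$. By Lemma~\ref{lem:two-identities-from-a-bc}(1), $m,n \in \nn_0$ with $n \ne 0$, and every element of $S_\alpha$ has a unique expression $c + d\alpha$ with $c,d \in \nn_0$. Since $S_\alpha$ is reduced by Lemma~\ref{lem:Salpha-reduced-if-additively-factorial}, the element $p$ is a nonunit. It therefore suffices to show that every factorization $p = (d_1\alpha + c_1)(d_2\alpha + c_2)$ with $c_i, d_i \in \nn_0$ has a factor equal to $1$, unless $\alpha = \sqrt{p}$.

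Apply Lemma~\ref{lem:two-identities-from-a-bc}(2) with $d = 0$ and $c = p$. This gives
\[
  0 = md_1d_2 + d_1c_2 + d_2c_1 \quad \text{and} \quad p = nd_1d_2 + c_1c_2 .
\]
All summands in the first equation are nonnegative, so each of them vanishes. We split into cases according to $d_1$ and $d_2$.

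\textsc{Case 1:} $d_1 = d_2 = 0$. Then $p = c_1c_2$ in $\nn_0$. Since $p$ is prime, one of $c_1, c_2$ equals $1$, so one factor is $1$.

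\textsc{Case 2:} exactly one of $d_1, d_2$ is nonzero. By symmetry, say $d_1 > 0$ and $d_2 = 0$. Then $d_1c_2 = 0$ forces $c_2 = 0$, so the second factor is $0$. This contradicts $p \neq 0$, so the case is impossible.

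\textsc{Case 3:} $d_1, d_2 > 0$. The vanishing of the three summands forces $m = 0$ and $c_1 = c_2 = 0$, so $p = nd_1d_2$. Moreover, $m = 0$ means $\alpha^2 = n$. Since $p$ is prime and $n, d_1, d_2 \ge 1$, exactly one of $n, d_1, d_2$ equals $p$ and the other two equal $1$.
\begin{itemize}
  \item If $n = p$, then $\alpha^2 = p$. Since $\alpha > 0$, this gives $\alpha = \sqrt{p}$, which is the excluded case.
  \item If $n = 1$, then $\alpha^2 = 1$ and $\alpha > 0$ give $\alpha = 1$. This contradicts the assumption that $\alpha$ is quadratic (equivalently, $x^2 - 1$ is reducible and so is not a minimal polynomial).
\end{itemize}

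Hence, when $\alpha \neq \sqrt{p}$, every factorization of $p$ has a factor equal to $1$, and $p$ is an atom of $S_\alpha$.

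The only delicate step is Case~3. There one has to observe that $n = 1$ is ruled out by the degree assumption, and that positivity of $\alpha$ is used to pin down $\alpha = \sqrt{p}$ rather than $\pm\sqrt{p}$.
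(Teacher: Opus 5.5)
Your proof is correct and matches the paper's argument essentially step for step. You apply Lemma~\ref{lem:two-identities-from-a-bc} with $d=0$ and $c=p$, rule out the mixed case $d_1d_2=0\neq d_1+d_2$, use primality when $d_1=d_2=0$, and when $d_1,d_2>0$ use $n\neq 1$ (by the degree assumption) to force $n=p$ and $\alpha=\sqrt{p}$. You also note explicitly that $p$ is a nonunit because $S_\alpha$ is reduced (Lemma~\ref{lem:Salpha-reduced-if-additively-factorial}), a point the paper leaves implicit.
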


\begin{proof}
  Write $m_\alpha(x)=x^2-mx-n$ for some $m,n \in \qq$. By Lemma~\ref{lem:two-identities-from-a-bc}, we obtain that $m,n \in \nn_0$ with $n \neq 0$, and we may use the coefficient identities from that lemma. Suppose that $p=(d_1\alpha+c_1)(d_2\alpha+c_2)$ for some $c_1,c_2,d_1,d_2 \in \nn_0$. Applying Lemma~\ref{lem:two-identities-from-a-bc} with $d=0$ and $c=p$ gives
  \begin{equation} \label{eq:prime-atom-coefficient-comparison}
    0 = md_1d_2+d_1c_2+d_2c_1 \quad \text{and} \quad p = nd_1d_2+c_1c_2.
  \end{equation}
  The first equality forces $md_1d_2=d_1c_2=d_2c_1=0$.

  If $d_1=0$ or $d_2=0$ then both $d_1$ and $d_2$ must be zero: indeed, if $d_1=0$ and $d_2>0$ then $c_1=0$, contradicting the second equality in~\eqref{eq:prime-atom-coefficient-comparison}; the other case is symmetric. Thus $p=c_1c_2$, and the primality of $p$ forces one factor to be~$1$.

  Now assume that $d_1,d_2>0$. Then the first equality gives $m=0$ and $c_1=c_2=0$, so $p=nd_1d_2$. Since $\alpha$ is quadratic, $n \neq 1$; otherwise $\alpha^2=1$, forcing $\alpha=1$. Hence $n=p$ and $d_1=d_2=1$, which implies that $\alpha^2=p$, that is, $\alpha=\sqrt{p}$.

  Therefore, unless $\alpha=\sqrt{p}$, every factorization of $p$ in $S_\alpha$ has a factor equal to~$1$. Hence $p$ is an atom of $S_\alpha$.
\end{proof}

\begin{thm}\label{thm:Bi-UF-statement-for-quadratic-ROI}
  Let $\alpha$ be a positive quadratic algebraic number such that $S_\alpha$ is additively factorial. Then $S_\alpha$ is not factorial.
\end{thm}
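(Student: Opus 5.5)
The plan is to use Lemma~\ref{lem:two-identities-from-a-bc} to write $\alpha^2 = m\alpha + n$ with $m,n \in \nn_0$ and $n \ge 1$, and then to argue by contradiction. Suppose $S_\alpha^*$ is factorial. Since $S_\alpha$ is reduced by Lemma~\ref{lem:Salpha-reduced-if-additively-factorial}, every atom of $S_\alpha^*$ is then a prime. So it suffices to find an atom $p$ and atoms $x,y$ such that $p \mid_{S_\alpha} xy$ while $p \nmid_{S_\alpha} x$ and $p \nmid_{S_\alpha} y$.

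The natural candidates are a rational prime $p$ and two elements $\alpha + a$ and $\alpha + b$. A rational prime never divides $\alpha + k$ in $S_\alpha$: if $\alpha + k = p(u\alpha + v)$, then comparing $\alpha$-coefficients in the free basis $\{1,\alpha\}$ gives $pu = 1$, which is impossible. On the other hand,
\[
  (\alpha+a)(\alpha+b) = (m+a+b)\alpha + (n+ab).
\]
So it is enough to find $a,b \in \nn_0$ with $p \mid m+a+b$ and $p \mid n+ab$. In that case $(\alpha+a)(\alpha+b) = p\,(u\alpha+v)$ with $u,v \in \nn_0$, so $p$ divides the product in $S_\alpha$ but divides neither factor.

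Eliminating $b$ via $b \equiv -m-a \pmod p$, the two congruences reduce to the single condition $p \mid a^2 + am - n$. My plan is therefore:
\begin{enumerate}
  \item Choose $a \in \nn$ large, so that $a^2 + am - n \ge 2$. Let $p$ be any prime factor of it.
  \item Choose $b \in \nn_0$ with $b \equiv -m-a \pmod p$. Replacing $b$ by $b + tp$ for suitable $t$ keeps both congruences and lets me avoid finitely many bad values.
  \item Invoke Proposition~\ref{prop:atoms-for-quadratic-UF-a} to conclude that $\alpha+a$ and $\alpha+b$ are atoms. The excluded cases are $\alpha^2 = \alpha + k$, which happens only when $m=1$ and $k=n$, and $m = 0$ with $n \mid k$.
  \item Invoke Proposition~\ref{prop:prime-numbers-are-atoms-for-quadratic-UF-a} to conclude that $p$ is an atom, provided $\alpha \ne \sqrt{p}$.
\end{enumerate}

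When $m \ge 1$, the bad cases are easy to dodge. Take $a$ large and $a \ne n$. Shift $b$ by a multiple of $p$ so that $b \ne n$. Also $\alpha \ne \sqrt{p}$ automatically, since $m \ne 0$.

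When $m = 0$, so $\alpha = \sqrt{n}$ with $n$ not a perfect square, I would split as follows:
\begin{enumerate}
  \item If $n$ is prime, Proposition~\ref{prop:sqrt-p-not-UF} already gives the conclusion.
  \item Otherwise, I expect the quickest route to be a direct one. Checking through Lemma~\ref{lem:two-identities-from-a-bc}, $\alpha$ is an atom: the $\alpha$-coefficient equation is $1 = d_1c_2 + d_2c_1$, and the constant equation is $0 = nd_1d_2 + c_1c_2$, which forces a factor equal to $1$. Then $\alpha \cdot \alpha = n = p_1 \cdots p_r$ with $r \ge 2$. Each $p_i$ is an atom by Proposition~\ref{prop:prime-numbers-are-atoms-for-quadratic-UF-a}, since $\alpha \ne \sqrt{p_i}$ when $n$ is composite. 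Moreover $p_i \ne \alpha$, so this gives two distinct factorizations.
\end{enumerate}

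The main obstacle is the bookkeeping of exceptional cases. I need to make sure the chosen $a$ and $b$ avoid the exclusions of Proposition~\ref{prop:atoms-for-quadratic-UF-a}, and that $p$ avoids the exclusion of Proposition~\ref{prop:prime-numbers-are-atoms-for-quadratic-UF-a}. That is why I isolate $m = 0$ and use the freedom to shift $b$ by multiples of $p$ when $m \ge 1$.
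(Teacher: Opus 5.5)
Your proof is correct, but it argues differently from the paper. The paper splits on the parity of $m$ (and, for $m=0$, into $n=2$, $n$ odd, and $n$ even $>2$). In each case it exhibits an explicit element with two factorizations, such as $(\alpha+k)^2=r\bigl(\alpha+\frac{k^2+n}{r}\bigr)$ with $r=m^2+4n$ for odd $m$, or $(\alpha+k)^2=r(2\alpha+r-2m_1+1)$ with $r=m_1^2+n$ for $m=2m_1>0$. Because it insists on two complete factorizations into atoms, it must also certify every cofactor. That is where its extra subcases come from: $m=1$, where the cofactor is $\alpha^2$, and the cases where $2\sqrt n+\frac{n+4}{2}$ or $2\alpha+r-2m_1+1$ might be twice an atom.

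You instead use that atoms of a factorial monoid are prime. You then only need two facts: the rational prime $p$ is an atom (Proposition~\ref{prop:prime-numbers-are-atoms-for-quadratic-UF-a}), and, by comparing $\alpha$-coefficients in the basis $\{1,\alpha\}$, $p$ divides neither $\alpha+a$ nor $\alpha+b$. The paper's identities are special instances of your mechanism with $a=b$ and $p\mid r$. So your formulation explains why they work, and it removes the parity split and all cofactor checks for $m\ge 1$.

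Your Step~3 is superfluous. A prime dividing a product divides one of the factors whether or not those factors are atoms, so you need neither Proposition~\ref{prop:atoms-for-quadratic-UF-a} nor the adjustment $a,b\neq n$ when $m=1$.

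Your $m=0$ argument is also shorter than the paper's: $\sqrt n\cdot\sqrt n=p_1\cdots p_r$ for composite $n$, and Proposition~\ref{prop:sqrt-p-not-UF} for prime $n$. It could even be absorbed into the uniform argument by taking $a$ coprime to $n$. Then every prime $p\mid a^2-n$ differs from $n$, so $\alpha\neq\sqrt p$.

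The paper's route buys an explicit element with two distinct factorizations. Yours buys uniformity, plus a structural fact: apart from the case $\alpha=\sqrt p$, every rational prime $p$ modulo which $x^2-mx-n$ has a root is an atom of $S_\alpha$ that is not prime.
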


\begin{proof}
  Write $m_\alpha(x)=x^2-mx-n$ for some $m,n \in \qq$. It follows from Lemma~\ref{lem:two-identities-from-a-bc} that $m,n \in \nn_0$ with $n \ge 1$, and $\alpha^2=m\alpha+n$. We now produce, in each case, two distinct factorizations of the same element into irreducibles. We split the rest of the proof into two cases according to the parity of $m$.
  \smallskip

  \textsc{Case 1:} $m$ is odd. In this case, we set
  \[
    r := m^2+4n \quad \text{and} \quad k := \frac{r-m}{2}.
  \]
  Observe that $r$ is an odd integer such that $r \ge 2$, and so $k \in \nn_0$. Moreover, using the expressions for both $r$ and $k$ above, it is not difficult to verify that
  \[
    k^2+n = r \frac{r-2m+1}{4}.
  \]
  In addition, in light of the identity $r-2m+1=(m-1)^2+4n$, the fact that $m$ is odd guarantees that $(r-2m+1)/4 \in \nn_0$. Hence
  \begin{equation} \label{eq:odd-m-nonunique-factorization}
    (\alpha+k)^2 = r\left(\alpha+\frac{k^2+n}{r}\right).
  \end{equation}
  Proposition~\ref{prop:atoms-for-quadratic-UF-a} shows that $\alpha+k$ is an atom: indeed, the only possible exceptional case here is $\alpha^2 = \alpha + k$, which would force $\alpha + k = m\alpha + n$, and so $m=1$ and $n=k$, but then $k=2n$, contrary to $n>0$.

  If $m \neq 1$ then Proposition~\ref{prop:atoms-for-quadratic-UF-a} also shows that the element $\alpha+(k^2+n)/r$ is irreducible. If $m=1$ then $(k^2+n)/r = n$, so the right-hand side of~\eqref{eq:odd-m-nonunique-factorization} is $r\alpha^2$; in this case $\alpha$ is irreducible by Proposition~\ref{prop:atoms-for-quadratic-UF-a}. Finally, since~$m$ is positive, every rational prime divisor of $r$ is irreducible by Proposition~\ref{prop:prime-numbers-are-atoms-for-quadratic-UF-a}. Thus,~\eqref{eq:odd-m-nonunique-factorization} yields two distinct factorizations into irreducibles of the same element.
  \smallskip

  \textsc{Case 2:} $m$ is even. First, assume that $m=0$. Then $\alpha = \sqrt{n}$, with $n$ not a square. If $n=2$ then the result follows from Proposition~\ref{prop:sqrt-p-not-UF}. If $n$ is odd then $n>1$ and
  \begin{equation} \label{eq:odd-n-square-root-factorization}
    (\sqrt{n}+1)^2 = 2\left(\sqrt{n}+\frac{n+1}{2}\right).
  \end{equation}
  Because $n \nmid 1$ and $n \nmid (n+1)/2$, the two nonconstant factors on the two sides of~\eqref{eq:odd-n-square-root-factorization} are irreducibles by Proposition~\ref{prop:atoms-for-quadratic-UF-a}. In addition, the factor $2$ is irreducible by Proposition~\ref{prop:prime-numbers-are-atoms-for-quadratic-UF-a}. Thus, in light of the identity~\eqref{eq:odd-n-square-root-factorization}, we see that $(\sqrt{n}+1)^2$ has two distinct factorizations into irreducibles.

  Still in the case when $m=0$, it remains to consider even $n>2$. Then
  \begin{equation} \label{eq:even-n-square-root-factorization}
    (\sqrt{n}+2)^2=2\left(2\sqrt{n}+\frac{n+4}{2}\right).
  \end{equation}
  Here $\sqrt{n}+2$ is irreducible by Proposition~\ref{prop:atoms-for-quadratic-UF-a}, and $2$ is irreducible by Proposition~\ref{prop:prime-numbers-are-atoms-for-quadratic-UF-a}. Set $B := 2\sqrt{n}+(n+4)/2$. If $B = (a+b\sqrt{n})(c+d\sqrt{n})$ then, after comparing the coefficient of $\sqrt{n}$, we obtain that $ad + bc = 2$. If $bd > 0$, then the constant coefficient is at least $n$, which is greater than $(n+4)/2$ because $n$ is even, nonsquare, and greater than~$2$. Hence, after swapping factors if necessary, one factor is a positive integer dividing~$2$. Consequently, $B$ is either irreducible or twice an irreducible. Splitting~$B$ if necessary, the right-hand side of~\eqref{eq:even-n-square-root-factorization} gives a factorization into irreducibles that is different from the one on the left-hand side.

  Finally, suppose that $m$ is an even positive integer, and take $m_1 \in \nn$ such that $m = 2m_1$. Now set
  \[
    r := m_1^2+n \quad \text{and} \quad k := r-m_1 = m_1^2-m_1+n.
  \]
  Observe that $r>1$ while $k \in \nn_0$. In addition, the following identity holds:
  \begin{equation} \label{eq:even-m-factorization}
    (\alpha+k)^2=r(2\alpha+r-2m_1+1).
  \end{equation}
  By Proposition~\ref{prop:atoms-for-quadratic-UF-a}, we see that the element $\alpha + k$ is irreducible. On the other hand, every rational prime divisor of~$r$ is irreducible by Proposition~\ref{prop:prime-numbers-are-atoms-for-quadratic-UF-a}. If $m_1=1$, then $2\alpha+r-2m_1+1=\alpha^2$, and $\alpha$ is irreducible by Proposition~\ref{prop:atoms-for-quadratic-UF-a}. Hence~\eqref{eq:even-m-factorization} yields the desired two distinct factorizations into irreducibles of the same element.

  Assume now that $m_1>1$, and set $C := 2\alpha + r - 2m_1 + 1$. Take $a,b,c,d \in \nn_0$ such that $C = (a+b\alpha)(c+d\alpha)$. After comparing the coefficient of $\alpha$, we obtain the equality
  \[
    ad + bc + 2m_1bd = 2.
  \]
  From the equality $m_1>1$, we deduce that $bd=0$. After swapping factors if necessary, we can assume that $b=0$ and, therefore, that $ad=2$. Hence $a=1$ or $a=2$. It follows that $C$ is either irreducible or $2(\alpha+\ell)$. In the latter case, $\alpha+\ell$ is irreducible by Proposition~\ref{prop:atoms-for-quadratic-UF-a}. Splitting $C$ if necessary, the right-hand side of~\eqref{eq:even-m-factorization} gives a factorization into irreducibles that is distinct from the factorization $(\alpha+k)^2$.
  \smallskip

  Hence we conclude that the semidomain $S_\alpha$ does not have the UF property.
\end{proof}

As an immediate consequence, we obtain that the statement of the Bi-UF Positive Conjecture holds for the special class of quadratic monogenic semidomains.

\begin{cor}[Bi-UF Positive Conjecture for Quadratic Monogenic Semidomains]\label{cor:main-result}
  There is no positive quadratic monogenic semidomain that is a bi-UFS.
\end{cor}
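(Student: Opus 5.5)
The corollary is a direct consequence of Theorem~\ref{thm:Bi-UF-statement-for-quadratic-ROI}, so I would argue by contradiction and reduce to that theorem. Suppose some positive quadratic monogenic semidomain is a bi-UFS. By definition it has the form $S_\alpha = \nn_0[\alpha]$ for a positive quadratic algebraic number $\alpha$. Here ``positive'' means $S_\alpha \subseteq \rr_{>0} \cup \{0\}$, and this forces $\alpha > 0$ because $\alpha \in S_\alpha$ and $\alpha \neq 0$. Being a bi-UFS, $S_\alpha$ is in particular additively factorial, i.e., $(S_\alpha,+)$ is a UFM.

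This is exactly the hypothesis of Theorem~\ref{thm:Bi-UF-statement-for-quadratic-ROI}: $\alpha$ is a positive quadratic algebraic number and $S_\alpha$ is additively factorial. The theorem then says $S_\alpha$ is not factorial, meaning its multiplicative monoid $S_\alpha^*$ is not a UFM. This contradicts the second half of the bi-UFS assumption. Hence no such semidomain exists.

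The only step needing care is matching terminology. A ``positive quadratic monogenic semidomain'' has to be read as $S_\alpha$ with $\alpha$ a positive real algebraic number of degree~$2$. The paper calls $S_\rho$ positive or algebraic according to the corresponding property of the generator, so this reading is consistent with its conventions, and the degree condition is what ``quadratic'' means. With that settled, everything else is inherited from Theorem~\ref{thm:Bi-UF-statement-for-quadratic-ROI}. That theorem itself relies on Lemma~\ref{lem:Salpha-reduced-if-additively-factorial} to ensure the distinct factorizations it constructs are not merely associate, so no further verification is needed here.
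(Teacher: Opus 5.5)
Your argument is correct and matches the paper, which records the corollary as an immediate consequence of Theorem~\ref{thm:Bi-UF-statement-for-quadratic-ROI}: a bi-UFS is in particular additively factorial, so that theorem rules out factoriality of $S_\alpha^*$. Your remark that positivity of $S_\alpha$ forces $\alpha>0$ only makes explicit a convention the paper leaves implicit.
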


\medskip
\subsection{Another Class of Non-Bi-UF Monogenic Semidomains}

It was proved in~\cite{DGZ26} that for any $q \in \qq_{>0}$, the monogenic semidomain $S_q$ is factorial if and only if $q \in \nn \cup \nn^{-1}$. Hence throughout this section, we assume that the generator $\alpha$ of $S_\alpha$ is non-rational.
\smallskip

Let $\alpha$ be a non-rational algebraic number with minimal polynomial $m_\alpha(x)$. We let $w_\alpha(x)$ denote the unique primitive polynomial in $\zz[x]$ with positive constant coefficient that is an integer multiple of $m_\alpha(x)$. It turns out that, for any positive non-rational algebraic number $\alpha$ such that $w_\alpha(0)$ is a composite integer, the monogenic semidomain $S_\alpha$ is not factorial.

\begin{thm} \label{thm:multiplicative obstruction for another class of positive semidomains}
  Let $\alpha$ be a positive non-rational algebraic number. If $w_\alpha(0)$ is a composite integer then the monogenic semidomain $S_\alpha$ is not factorial.
\end{thm}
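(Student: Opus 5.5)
The plan is to exploit the fact that, because $\alpha$ is a positive real number, its primitive polynomial $w_\alpha(x)$ cannot have all coefficients nonnegative (a polynomial in $\nn_0[x]$ with nonzero constant term has no positive root). So I would write $w_\alpha(x)=A(x)-B(x)$ with $A,B\in\nn_0[x]$ sharing no monomials. Since the constant term $w_\alpha(0)>0$, it belongs to $A$, and $B(0)=0$. This gives a nontrivial identity $A(\alpha)=B(\alpha)=\alpha B_1(\alpha)$ in $S_\alpha$, where $B_1(x)=B(x)/x\in\nn_0[x]$. The whole argument will be about how multiplicative divisibility by $\alpha$ interacts with constant terms modulo $w_\alpha(0)$.

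The first key step is a divisibility criterion. Suppose $p(\alpha)=\alpha q(\alpha)$ with $p,q\in\nn_0[x]$. Then $p(x)-xq(x)\in\zz[x]$ vanishes at $\alpha$, so by Gauss's lemma (using that $w_\alpha$ is primitive) it equals $h(x)w_\alpha(x)$ for some $h\in\zz[x]$. Comparing constant terms gives $p(0)=h(0)w_\alpha(0)$. Hence if $\alpha$ divides an element of $S_\alpha$, then every representation $p$ of that element has $w_\alpha(0)\mid p(0)$. In particular, $\alpha$ does not divide any integer $c\in\nn$ with $w_\alpha(0)\nmid c$. Conversely, the identity $A(\alpha)=\alpha B_1(\alpha)$ shows that $\alpha$ does divide some element whose representation has constant term exactly $w_\alpha(0)$. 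The analogous statement should hold for powers of $\alpha$, and I would use the identity to produce multiples of $\alpha$ with controlled constant term.

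The second step uses compositeness. Write $w_\alpha(0)=ab$ with $a,b>1$, and suppose toward a contradiction that $S_\alpha^*$ is factorial. I would first show that $S_\alpha$ is reduced, or at least that units have a representation with constant term $\pm$-compatible with the criterion. Then I would consider a prime $\pi$ dividing $\alpha$. Starting from $A(\alpha)=\alpha B_1(\alpha)$ and multiplying by suitable elements, I would aim to show that $\alpha$ (hence $\pi$) divides an element of the form $ab+\alpha(\cdots)$, and then, by raising to powers and expanding, that $\pi$ divides a product in which $a$ and $b$ appear as separate factors. Primality of $\pi$ then forces $\pi$ to divide an element whose constant terms are only divisible by $a$ (or only by $b$). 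Playing this against the criterion, applied to $\pi$ in place of $\alpha$ via $\pi\mid\alpha$ with cofactor analysis, should produce a representation with constant term not divisible by $ab$, which is the contradiction.

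The main obstacle is this last step. The criterion is stated for $\alpha$, but primality gives information only about the prime factor $\pi$, and $\pi$ may be a proper divisor of $\alpha$ with its own constant-term behavior. To handle this, I would first try to prove that $\alpha$ itself is an atom of $S_\alpha^*$, hence prime under factoriality. Suppose $\alpha=f(\alpha)g(\alpha)$. Then $f(x)g(x)-x=h(x)w_\alpha(x)$, so $f(0)g(0)=h(0)ab$. I would try to use the positivity of $\alpha$ together with a size argument to force one factor to be a unit. If that works, I would exhibit an explicit element divisible by $\alpha$, such as $A(\alpha)$ split as $a\cdot(\cdots)$ suitably, and show that $\alpha$ divides a product without dividing either factor, since each factor has constant term not divisible by $ab$ in any representation.
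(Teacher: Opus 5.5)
\textbf{Verdict: gap in execution.} Your last paragraph has the same skeleton as the paper: prove that $\alpha$ is an atom of $S_\alpha^*$, then use your Gauss-lemma criterion ($\alpha \mid_{S_\alpha} p(\alpha)$ forces $w_\alpha(0) \mid p(0)$) to show that $\alpha$ is not prime. The paragraph about a prime $\pi \mid \alpha$ and the attempt to show $S_\alpha$ is reduced are dead ends you can drop. However, neither load-bearing step is actually carried out.

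\textbf{Atomicity.} The relation $f(0)g(0)=h(0)ab$ does not by itself force a unit factor. What works, and what the paper does, is a size argument.
\begin{itemize}
\item Your criterion with $c=1$ shows $\alpha$ is not a unit.
\item If $\alpha>1$, every nonconstant $h\in\nn_0[x]$ has $h(\alpha)\ge\alpha$. So in $\alpha=f(\alpha)g(\alpha)$ exactly one factor is a constant $c\in\nn$ (both constant would give $\alpha\in\nn$), and $\alpha\ge c\alpha$ forces $c=1$.
\item If $0<\alpha<1$, write $f=x^rF$ and $g=x^sG$ with $F(0),G(0)\ge 1$, so $F(\alpha),G(\alpha)\ge 1$. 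Then $r+s=0$ contradicts $\alpha<1$, and $r+s\ge 2$ would make $\alpha$ a unit. Hence $r+s=1$, which forces $F(\alpha)=G(\alpha)=1$.
\end{itemize}

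\textbf{Non-primality.} Splitting $A(\alpha)$ as $a\cdot(\cdots)$ fails as stated. The nonconstant coefficients of $A$ need not be multiples of $a$, so $A(\alpha)/a$ need not lie in $S_\alpha$. The missing idea is to pad by a multiple of $x$:
\begin{itemize}
\item Write $w_\alpha(x)=\sum_{k=0}^d c_kx^k$. For $1\le k\le d$, pick $r_{k-1}\in\nn_0$ with $c_k+r_{k-1}\ge 0$ and $a\mid c_k+r_{k-1}$.
\item Set $R(x):=\sum_{k=1}^d r_{k-1}x^{k-1}$ and $Q(x):=(w_\alpha(x)+xR(x))/a\in\nn_0[x]$.
\item Then $Q(0)=b$ and $a\,Q(\alpha)=\alpha R(\alpha)$, so $\alpha\mid_{S_\alpha} a\,Q(\alpha)$.
\item Your criterion gives $\alpha\nmid_{S_\alpha} a$ and $\alpha\nmid_{S_\alpha} Q(\alpha)$, because $0<a,b<ab$.
\end{itemize}

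\textbf{Comparison with the paper.} Once completed this way, your route is lighter than the paper's. The paper uses the factor $\alpha+p_0$ instead of an integer. That forces it to build, by a recursive sign-controlled choice of coefficients, a polynomial $f\in\nn_0[x]$ with $f(0)=Nw_\alpha(0)$, $f-Nw_\alpha\in\nn_0[x]$, and $x+p_0\mid_{\nn_0[x]} f$. With an integer factor, none of that construction is needed.
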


\begin{proof}
  Let $\alpha$ be as in the statement of the theorem and assume that $w_\alpha(0)$ is a composite integer. We first show that $\alpha$ is an atom of $S_\alpha$. Before doing so, observe that $\alpha$ is not a unit of $S_\alpha$. Indeed, if $\alpha h(\alpha)=1$ for some $h(x) \in \nn_0[x]$, then $xh(x)-1$ vanishes at~$\alpha$. Since $w_\alpha(x)$ is primitive, Gauss's lemma implies that $w_\alpha(x)$ divides $xh(x)-1$ in~$\zz[x]$. Comparing constant terms gives $w_\alpha(0) \mid 1$, contradicting that $w_\alpha(0)$ is composite.
  
  Now take $f(x),g(x) \in \nn_0[x]$ such that $\alpha=f(\alpha)g(\alpha)$. We prove that one of the two factors is a unit of $S_\alpha$. Since $\alpha>0$, neither $f(x)$ nor $g(x)$ can be the zero polynomial. Now we consider the following two cases.
  \smallskip

  \textsc{Case 1:} $\alpha>1$. In this case, for every nonconstant polynomial $h(x) \in \nn_0[x]$, one has $h(\alpha) \ge \alpha$. Hence $f$ and $g$ cannot both be nonconstant as otherwise $\alpha=f(\alpha)g(\alpha) \ge \alpha^2 > \alpha$, which is not possible. They also cannot both be constant polynomials because if this were the case then $\alpha = f(\alpha) g(\alpha) = f(0)g(0) \in \nn$, which is not possible becase $\alpha$ is not rational. Thus, after swapping $f$ and~$g$ if necessary, we may assume that $f$ is constant and $g$ is nonconstant. Since $f(\alpha) = f(0) \in \nn$,
  \[
    \alpha = f(\alpha) g(\alpha) = f(0)g(\alpha) \ge f(0)\alpha,
  \]
  which ensures that $1 \ge f(0)$ and so $f(0)=1$. Hence $f(\alpha)=1$, and then one of the two factors in $\alpha = f(\alpha)g(\alpha)$ is a unit.
  \smallskip

  \textsc{Case 2:} $0<\alpha<1$. Let $r$ and $s$ be the orders of the polynomials $f$ and $g$, respectively, and take $F,G \in \nn_0[x]$ such that
  \[
    f(x) = x^rF(x) \quad \text{and} \quad g(x)=x^sG(x).
  \] 
  Observe that the polynomials $F$ and $G$ have order zero, which implies that $F(\alpha) \ge 1$ and $G(\alpha) \ge 1$. Now write
  \[
    \alpha = f(\alpha) g(\alpha) = \alpha^{r+s}F(\alpha)G(\alpha).
  \]
  Note that $r+s \neq 0$ as otherwise $f(\alpha)g(\alpha) = F(\alpha)G(\alpha) \ge 1 > \alpha$. On the other hand, $r+s < 2$: indeed, if $r+s \ge 2$ then $1=\alpha^{r+s-1}F(\alpha)G(\alpha)$, which is not possible as $\alpha$ is not a unit. Hence $r+s=1$. Therefore $F(\alpha)G(\alpha)=1$, so both $F(\alpha)$ and $G(\alpha)$ are units of $S_\alpha$. Thus, either $f(\alpha)=F(\alpha)$ or $g(\alpha) = G(\alpha)$, and so one of the factors in $\alpha = f(\alpha)g(\alpha)$ is a unit.

  Since in both cases, we have obtained that either $f(\alpha)$ or $g(\alpha)$ is a unit of the semidomain $S_\alpha$, we conclude that $\alpha$ is an atom of $S_\alpha$.
  \smallskip

  Now choose $p_0 \in \pp$ such that $p_0 \mid w_\alpha(0)$, and then take $N \in \nn$ such that $\gcd(N, w_\alpha(0))=1$. We write $w_\alpha(x) = \sum_{k=0}^d c_k x^k$ for some $c_0, c_1, \dots, c_d \in \zz$ and $c_0 \in \nn$. Let us prove the following claim.
  \smallskip

  \noindent \textsc{Claim.} There exists a polynomial $f(x) \in \nn_0[x]$ such that
  \[
    f(0) = Nw_\alpha(0), \qquad f(x) - N w_\alpha(x) \in \nn_0[x], \qquad \text{and} \qquad x+p_0 \mid_{\nn_0[x]} f(x).
  \]

  \noindent \textsc{Proof of Claim.} Let us produce a sequence $a_0, \dots, a_{d+1} \in \nn_0$ such that $f(x) := \sum_{i=0}^{d+1} a_i x^i$ satisfies the three given conditions. In order to satisfy the first condition, we set $a_0 := Nw_\alpha(0)$. Suppose that, for some $k \in \ldb 1,d\rdb$, the coefficients $a_0,\dots,a_{k-1}$ have already been chosen so that
  \[
    S_{k-1}:=\sum_{i=0}^{k-1} a_i(-p_0)^i
  \]
  is divisible by~$p_0^k$ and has sign $(-1)^{k-1}$ when nonzero. Let $r_k$ be the least nonnegative residue of $S_{k-1}$ modulo~$p_0^{k+1}$. Since $p_0^k \mid S_{k-1}$, we see that $p_0^k \mid r_k$. Choose $t \in \nn_0$ sufficiently large, and set
  \[
    a_k:=(-1)^{k+1}\frac{r_k}{p_0^k} + p_0t.
  \]
  Then $S_k:=\sum_{i=0}^k a_i(-p_0)^i$ is divisible by~$p_0^{k+1}$. By taking $t$ larger if necessary, we also ensure that $S_k$ has sign $(-1)^k$ and that $a_k \ge \max\{0,N c_k\}$. After this has been done for $k=d$, define
  \[
    a_{d+1} := (-1)^d \frac{S_d}{p_0^{d+1}}.
  \]
  This is a positive integer, and the definition gives $\sum_{i=0}^{d+1} a_i(-p_0)^i=0$. Hence
  \[
      f(-p_0) = a_{d+1}(-p_0)^{d+1} + \sum_{i=0}^d a_i (-p_0)^i = (-1)^d \frac{S_d}{p_0^{d+1}}(-p_0)^{d+1} + \sum_{i=0}^d a_i (-p_0)^i = -S_d + \sum_{i=0}^d a_i (-p_0)^i = 0.
  \] 
  Thus, $x+p_0$ divides $f(x)$ in $\zz[x]$, which is the third condition. Now write
  \[
      \frac{f(x)}{x+p_0} = \sum_{i=0}^d b_i x^i
  \]
  for some $b_0, \dots, b_d \in \zz$. Observe that
  \[
    \sum_{i=0}^{d+1} a_i x^i = f(x) = (x+p_0) \sum_{i=0}^d b_i x^i = (b_dx^{d+1} + p_0b_0) + \sum_{i=1}^d (b_{i-1} + p_0b_i) x^i,
  \]
  so $p_0b_0 = a_0$ while $p_0^{i+1} b_i = p_0^i a_i - p_0^i b_{i-1}$ for every $i \in \ldb 1,d \rdb$. This implies that, for each $k \in \ldb 0,d \rdb$,
  \[
    p_0^{k+1}b_k = p_0^ka_k + (-1)^1 p_0^{k-1} a_{k-1} + (-1)^2 p_0^{k-2} a_{k-2} + \dots + (-1)^{(k-1)} p_0 a_1 + (-1)^k a_0= \sum_{i=0}^k (-1)^{(k-i)} p_0^i a_i,
  \]
  and so
  \[
      b_k = \frac{1}{p_0^{k+1}} (-1)^k \sum_{i=0}^k a_i(-p_0)^i = \frac{(-1)^kS_k}{p_0^{k+1}} \in \zz
  \]
  because of the fact that $p_0^{k+1} \mid S_k$ for every $k \in \ldb 0,d \rdb$. Moreover, for each $k \in \ldb 0,d \rdb$, the fact that $S_k$ and $(-1)^k$ have the same sign ensures that $b_k \in \nn_0$. As a consequence, $f(x)/(x+p_0) \in \nn_0[x]$. Finally, the inequalities $a_k \ge \max\{0,Nc_k\}$ and the equality $a_0 = Nw_\alpha(0)$ imply that $f(x) - Nw_\alpha(x) \in \nn_0[x]$, which is the second condition of the claim. Hence we have established the claim.
  \smallskip
  
  In order to prove that $S_\alpha$ is not factorial, it suffices to argue that $\alpha$ is an atom that is not prime. To argue this, first set $g(x) := f(x)/(x+p_0) \in \nn_0[x]$ so that we can factor $f(\alpha)$ as follows: 
  \[
    f(\alpha) = (\alpha + p_0) g(\alpha).
  \]
  Since $f(x)-N w_\alpha(x)$ has nonnegative coefficients and constant term~$0$, the divisibility relation $\alpha \mid_{S_\alpha} f(\alpha)$ holds. We claim that $\alpha$ divides neither factor on the right-hand side. If $\alpha \mid_{S_\alpha} \alpha + p_0$ then $xq(x)-x-p_0$ vanishes at~$\alpha$ for some polynomial $q(x) \in \nn_0[x]$. Hence $w_\alpha(x)$ divides $xq(x)-x-p_0$ in~$\zz[x]$, forcing $w_\alpha(0) \mid p_0$, which is impossible because $w_\alpha(0)$ is composite and $p_0 \mid w_\alpha(0)$. Similarly, if $\alpha \mid_{S_\alpha} g(\alpha)$ then $g(x)-xq(x)$ vanishes at~$\alpha$ for some polynomial $q(x) \in \nn_0[x]$. Thus, $w_\alpha(x)$ divides $g(x)-xq(x)$ in~$\zz[x]$, so $w_\alpha(0) \mid g(0)$. But $f(0) = p_0 g(0) = Nw_\alpha(0)$ and, therefore, $g(0) = Nw_\alpha(0)/p_0$; the divisibility $w_\alpha(0) \mid Nw_\alpha(0)/p_0$ would force $p_0 \mid N$, contradicting $\gcd(N,w_\alpha(0))=1$.

  As a result, the atom $\alpha$ divides $f(\alpha) = (\alpha + p_0)g(\alpha)$, but it divides neither $\alpha + p_0$ nor $g(\alpha)$. Hence~$\alpha$ is not a prime. Since every atom in a factorial monoid is prime, so $S_\alpha$ is factorial.
\end{proof}

As a consequence of Theorem~\ref{thm:multiplicative obstruction for another class of positive semidomains}, we can identify another class of positive semidomains whose members do not satisfy the bi-UF condition regardless of whether they are additively factorial.

\begin{cor}
  Let $\alpha$ be a positive non-rational algebraic number. If $w_\alpha(0)$ is a composite integer then $S_\alpha$ is not bi-UFS.
\end{cor}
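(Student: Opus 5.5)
This corollary follows at once from Theorem~\ref{thm:multiplicative obstruction for another class of positive semidomains}, and the plan is simply to unwind the definitions. By definition, $S_\alpha$ is a bi-UFS exactly when two conditions hold at the same time. The first is that $S_\alpha$ is factorial, meaning its multiplicative monoid $S_\alpha^*$ is a UFM. The second is that $S_\alpha$ is additively factorial.

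Now let $\alpha$ be a positive non-rational algebraic number with $w_\alpha(0)$ composite. Theorem~\ref{thm:multiplicative obstruction for another class of positive semidomains} applies verbatim to such an $\alpha$ and shows that $S_\alpha^*$ is not factorial. Recall how that proof goes: it exhibits the atom $\alpha$, which divides $f(\alpha) = (\alpha+p_0)g(\alpha)$ but divides neither factor. So $\alpha$ is not prime, and in a factorial monoid every atom is prime. Hence the first condition in the definition of a bi-UFS fails, and $S_\alpha$ is not a bi-UFS.

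The additive structure plays no role here. Whether $(S_\alpha,+)$ is a UFM, for instance whether the conditions in Theorem~\ref{prop:additively-factoriality-characterizations} hold, is irrelevant, because the multiplicative obstruction alone already rules out the bi-UF property.

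There is no genuine obstacle at this stage, since all the work lies in the theorem. The one point to check is that the final sentence of that proof should read ``$S_\alpha$ is not factorial''. That is the conclusion the argument actually establishes, namely an atom that is not prime, and it is the form we invoke here.
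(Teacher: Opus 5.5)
Your proposal is correct and matches the paper, which records this corollary as an immediate consequence of the preceding theorem: failure of multiplicative factoriality alone rules out the bi-UF property, so the additive structure plays no role. You are also right that the last sentence of that theorem's proof has a typo and should conclude that $S_\alpha$ is \emph{not} factorial.
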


Given the relevance of algebraic integers, we record the following corollary of Theorem~\ref{thm:multiplicative obstruction for another class of positive semidomains}.

\begin{cor} \label{cor:bi-uf for algebraic integers}
  Let $\alpha$ be a positive non-rational algebraic integer. If $|N_{\mathbb Q(\alpha)/\mathbb Q}(\alpha)|$ is composite then~$S_\alpha$ is not bi-UFS.
\end{cor}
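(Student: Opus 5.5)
The plan is to reduce Corollary~\ref{cor:bi-uf for algebraic integers} directly to Theorem~\ref{thm:multiplicative obstruction for another class of positive semidomains}. That theorem already shows that $S_\alpha$ is not factorial once $w_\alpha(0)$ is composite. Since a bi-UFS must in particular be factorial, it suffices to prove that $w_\alpha(0) = |N_{\mathbb Q(\alpha)/\mathbb Q}(\alpha)|$ whenever $\alpha$ is an algebraic integer.

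First I will identify $w_\alpha(x)$ for an algebraic integer. The minimal polynomial $m_\alpha(x)$ is monic with integer coefficients, so it is primitive. The integer multiples of $m_\alpha(x)$ that are primitive in $\zz[x]$ are therefore exactly $\pm m_\alpha(x)$. Since $\alpha$ is non-rational, $m_\alpha(0) \neq 0$: otherwise $x \mid m_\alpha(x)$, and irreducibility would force $m_\alpha(x) = x$, giving $\alpha = 0$. Choosing the sign that makes the constant coefficient positive gives $w_\alpha(x) = \pm m_\alpha(x)$ with $w_\alpha(0) = |m_\alpha(0)|$.

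Next I will relate $m_\alpha(0)$ to the norm. Write $d = \deg m_\alpha(x) = [\mathbb Q(\alpha):\mathbb Q]$. The norm $N_{\mathbb Q(\alpha)/\mathbb Q}(\alpha)$ is the product of the conjugates of $\alpha$, which equals $(-1)^d m_\alpha(0)$. Hence $|N_{\mathbb Q(\alpha)/\mathbb Q}(\alpha)| = |m_\alpha(0)| = w_\alpha(0)$. So the hypothesis that the norm is composite says exactly that $w_\alpha(0)$ is composite, and the previous corollary (equivalently, Theorem~\ref{thm:multiplicative obstruction for another class of positive semidomains}) yields that $S_\alpha$ is not a bi-UFS.

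None of these steps is a genuine obstacle. The only point needing care is the normalization defining $w_\alpha$: we must check that no nontrivial integer scaling is involved, which uses primitivity of the monic polynomial $m_\alpha(x)$. We must also check that the constant term is nonzero, so that the sign normalization makes sense.
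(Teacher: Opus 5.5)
Your proposal is correct and matches the paper's argument. The paper states the corollary as an immediate consequence of Theorem~\ref{thm:multiplicative obstruction for another class of positive semidomains}, using exactly the observation you make: for an algebraic integer $m_\alpha(x)$ is monic in $\zz[x]$, so $w_\alpha(x)=\pm m_\alpha(x)$ and $w_\alpha(0)=|m_\alpha(0)|=|N_{\mathbb Q(\alpha)/\mathbb Q}(\alpha)|$ (a step the paper leaves unwritten and you justify correctly).
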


We conclude this section highlighting the remaining monogenic semidomains $S_\alpha$ one must consider to provide a full answer to the Bi-UF Positive Conjecture. These are the monogenic semidomains $S_\alpha$ generated by non-rational $\alpha$ that satisfy the following two conditions:
\begin{itemize}
  \item $S_\alpha$ is additively factorial, which means that the only positive coefficient of $m_\alpha(x)$ is its leading coefficient.
  \smallskip

  \item $S_\alpha$ is factorial, which implies that $w_\alpha(0) \in \{1\} \cup \pp$.
\end{itemize}













\bigskip
\section{Beyond the Bi-UF Positive Conjecture}
\label{sec:beyond the Bi-UF Pos Conj}

The bi-UF property seems to be a strong condition. Indeed, at present, $\mathbb{N}_0$ is the only known semidomain that satisfies the bi-UF property. This calls for a broader search. In this direction, the following conjecture, which we refer to as the \emph{Bi-UF Complex Conjecture}, extends the Bi-UF Positive Conjecture from the nonnegative real ray to the complex plane.

\begin{conj}[Bi-UF Complex Conjecture]
  The unique Bi-UF complex semidomain is $\nn_0$.
\end{conj}

For each quadratic monogenic semidomain $S_\alpha$ considered in Theorem~\ref{thm:Bi-UF-statement-for-quadratic-ROI}, the additive monoid $(S_\alpha,+)$ is both factorial and reduced. In general, it is well known and not difficult to argue that every reduced finite-rank factorial monoid is a finite-rank free commutative monoid \cite[Theorem~1.2.2]{GH06}. It turns out that every semidomain whose additive monoid is a finite-rank free commutative monoid is isomorphic to a complex semidomain; more precisely, it is isomorphic to a subsemiring of an algebraic number field whose additive monoid is finitely generated. We conclude this section by establishing this result.

\begin{thm} \label{thm:structural result}
  Let $S$ be a semidomain whose additive monoid is a finite-rank free commutative monoid. Then there exist $\rho_1, \dots, \rho_n \in \aaa$ such that $S \cong \sum_{k=1}^n \nn_0\rho_k$.
\end{thm}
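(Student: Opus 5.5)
Let $S$ be a subsemiring of an integral domain $R$, and assume that $(S,+)$ is free of finite rank $n$ with basis $e_1,\dots,e_n$. The plan is to realize $S$ inside a finite-dimensional commutative $\qq$-algebra, show that this algebra is a field, and then embed it into $\cc$.

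First, we pass to the Grothendieck group. Because $S$ sits inside the ring $R$, the subgroup $G := S - S$ of $(R,+)$ is a ring. Indeed, products of differences are again differences, since $(a-b)(c-d) = (ac+bd)-(ad+bc)$. Moreover, $G \cong \gp(S,+) \cong \zz^n$ with basis $e_1,\dots,e_n$. Next, we decide the characteristic. If $R$ had characteristic $p>0$, then $p \cdot 1 = 0$ in $S$, so $1$ would have finite additive order. That is impossible in the free (hence torsion-free and reduced) monoid $(S,+)$, unless $S=\{0\}$, which is excluded because $1 \in S$ is nonzero in a domain. Hence $R$ has characteristic $0$, and $\zz \subseteq G$.

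Second, we form $A := G \otimes_\zz \qq$, which we identify with $\qq G$ inside the fraction field $K$ of $R$. Since $G \cong \zz^n$ is torsion-free, the natural map $G \to \qq G$ is injective. Then $A$ is a commutative $\qq$-algebra of dimension $n$ contained in the field $K$, so $A$ is a domain. The key step is the standard fact that a finite-dimensional domain over a field is a field: multiplication by a nonzero $a$ is an injective $\qq$-linear endomorphism of $A$, hence surjective, so $a$ is invertible. Thus $A$ is a number field of degree $n$ over $\qq$.

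Third, we choose a field embedding $\sigma \colon A \hookrightarrow \cc$. Such an embedding exists because $A = \qq(\theta)$ by the primitive element theorem, and we may send $\theta$ to any complex root of its minimal polynomial. Its image lies in $\aaa$. Restricting $\sigma$ to $S \subseteq G \subseteq A$ gives an injective semiring homomorphism. Setting $\rho_k := \sigma(e_k)$, we get $\sigma(S) = \sum_{k=1}^n \nn_0 \rho_k$, because every element of $S$ is an $\nn_0$-combination of $e_1,\dots,e_n$. Therefore $S \cong \sum_k \nn_0\rho_k$.

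The main obstacle is the bookkeeping in the second step. There we must make sure that $S$ embeds into $G\otimes\qq$, that this algebra really sits inside a domain so that the field argument applies, and that the characteristic is $0$; the remaining steps are routine.
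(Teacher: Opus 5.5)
Your proposal is correct and follows essentially the same route as the paper. You pass to the ring $S-S\cong\zz^n$, tensor with $\qq$ to get a finite-dimensional domain, conclude it is a number field, and embed it in $\cc$ with $\rho_k=\sigma(e_k)$. The only cosmetic differences are that you realize $G\otimes_\zz\qq$ as $\qq G$ inside $\operatorname{Frac}(R)$ where the paper uses the localization $(\zz\setminus\{0\})^{-1}R$, and that you get characteristic $0$ from reducedness of $(S,+)$ where the paper uses torsion-freeness of $R$.
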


\begin{proof}
  Let $n$ be the rank of the additive monoid of $S$, and take $e_1, \dots, e_n \in S$ such that $(S,+)$ is free on the set $\{e_1, \dots, e_n\}$. After replacing $S$ by an isomorphic copy embedded into an integral domain, we can assume that $S$ itself is a subsemiring of an integral domain $R$. Let
  \[
    \gp_+(S):=\{s-t : s,t \in S\} \subseteq R.
  \]
  Then $\gp_+(S)$ is the Grothendieck group of $(S,+)$ inside $(R,+)$. It is closed under multiplication because, for all $s_1,s_2,t_1,t_2 \in S$,
  \[
    (s_1-t_1)(s_2-t_2)=(s_1s_2+t_1t_2)-(s_1t_2+s_2t_1).
  \]
  After replacing $R$ by its subring $\gp_+(S)$, we may assume that the Grothendieck group of $(S,+)$ is $(R,+)$. In particular, $R$ is a free $\zz$-module with basis $e_1, \dots, e_n$, so $R = \oplus_{i=1}^n \zz e_i$, and we can identify $(S,+)$ with $\sum_{k=1}^n \nn_0 e_k$. Therefore $R$ is a torsion-free $\zz$-module, which guarantees that $R$ has characteristic~$0$.

  Now extend scalars from $\zz$ to $\qq$ by tensoring $R$ with $\qq$. This produces the $\qq$-vector space $V := \qq \otimes_\zz \gp_+(S)$ and embeds $S$ into a $\qq$-vector space as follows:
  \[
    S \hookrightarrow \gp_+(S) = R \stackrel{\varphi}{\hookrightarrow} V := \qq\otimes_\zz R,
  \]
  where $\varphi \colon \gp_+(S) \to V$ is the $\zz$-module homomorphism determined by $\varphi(g) = 1 \otimes g$ for all $g \in \gp_+(S)$. The map $\varphi$ is injective because $\qq$ is the field of fractions of the PID $\zz$ and $\gp_+(S)$ is a torsion-free $\zz$-module.

  It remains to extend the multiplication on $R$ to $V$. The natural map $\mu \colon R \times R \to R$ that induces the multiplication on $R$ is $\zz$-bilinear, so scalar extension gives a $\qq$-bilinear map
  \[
    \widetilde{\mu} \colon V \times V \to V \quad \text{determined by} \quad \widetilde{\mu}(q \otimes r, q' \otimes r')=qq' \otimes rr'
  \]
  for all $q,q' \in \qq$ and $r,r' \in R$. This turns $V$ into a finite-dimensional commutative $\qq$-algebra extending the ring structure on $R$. Since $R$ is an integral domain of characteristic~$0$, the $\qq$-algebra $V$ is also an integral domain: to see this, it suffices to identify $V$ with the localization $(\zz \setminus \{0\})^{-1}R$.

  Every nonzero element $w \in V$ acts injectively on the finite-dimensional $\qq$-vector space~$V$ by multiplication. Hence $w$ also acts surjectively on $V$, and so $V$ is a field. Thus $V$ is a finite field extension of $\qq$, and there exists an embedding $\sigma \colon V \hookrightarrow \cc$. Finally, set $\rho_k := \sigma(1 \otimes e_k)$ for every $k \in \ldb 1,n\rdb$. From the equality $S = \sum_{k=1}^n \nn_0 (1 \otimes e_k)$, we obtain the desired isomorphism
  \[
    S \cong \sigma(S) = \sigma\bigg(\bigoplus_{k=1}^n \nn_0(1 \otimes e_k)\bigg) = \bigoplus_{k=1}^n \nn_0 \sigma(1 \otimes e_k) = \bigoplus_{k=1}^n \nn_0 \rho_k.
  \]
\end{proof}

From the previous theorem, we already obtained some partial progress towards the Bi-UF Complex Conjecture.

\begin{cor}
  Let $S$ be an additively reduced complex semidomain whose additive monoid has finite-rank. If $S$ is a bi-UFS then there is a number field $K$ and algebraic numbers
  $\rho_1,\dots,\rho_n \in K$ such that
  \[
    S \cong \bigoplus_{k=1}^n \nn_0\rho_k.
  \]
  In particular, if $(S,+)$ has rank $1$ then $S \cong \nn_0$.
\end{cor}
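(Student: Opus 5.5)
The plan is to reduce the first assertion to Theorem~\ref{thm:structural result} and then handle the rank-one case by hand.

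\textbf{Step 1: the additive monoid is free of finite rank.} Since $S$ is a bi-UFS, the monoid $(S,+)$ is factorial. It is reduced by hypothesis and has finite rank. By \cite[Theorem~1.2.2]{GH06}, a reduced factorial monoid is free on its set of atoms. Finite rank of $\gp(S,+)$ forces the basis to be finite, since distinct atoms of a free monoid are $\zz$-linearly independent in the Grothendieck group. So $(S,+)$ is a finite-rank free commutative monoid.

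\textbf{Step 2: apply the structural theorem.} As a complex semidomain, $S$ is a subsemiring of the integral domain $\cc$, so it is a semidomain. Theorem~\ref{thm:structural result} now gives algebraic numbers $\rho_1,\dots,\rho_n$ with $S \cong \sum_{k=1}^n \nn_0\rho_k$. The number field is $K := \qq(\rho_1,\dots,\rho_n)$, which is finite over $\qq$ because it is generated by finitely many algebraic numbers. Alternatively, $K$ is the image $\sigma(V)$ from the proof of that theorem. The sum is direct because the $\rho_k$ are images of a free basis under an injective map; this matches the direct-sum notation in the statement.

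\textbf{Step 3: the rank-one case.} Suppose $n=1$, so $S \cong \nn_0\rho$ for a single $\rho$. The identity $1$ lies in $S$, so $1 = a\rho$ for some $a \in \nn$. Closure under multiplication gives $\rho^2 = b\rho$ for some $b \in \nn_0$. Since $\rho \neq 0$, this means $\rho = b$, and then $1 = ab$. Because $a,b \in \nn_0$, we get $a=b=1$, so $\rho=1$ and $S \cong \nn_0$.

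I do not expect a serious obstacle. The only point needing care is Step~1, namely that "finite rank" in the Grothendieck-group sense bounds the number of atoms of a reduced factorial monoid. This follows because $\gp(M)$ is the free abelian group on $\mathcal{A}(M)$ when $M$ is free.
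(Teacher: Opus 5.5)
Your proposal is correct and follows essentially the same route as the paper. Factoriality, reducedness, and finite rank make $(S,+)$ a finite-rank free monoid, Theorem~\ref{thm:structural result} then gives the $\nn_0$-span, and the rank-one case is settled from $1=a\rho$ and $\rho^2=b\rho$; the only cosmetic difference is that you cancel $\rho$ in the field $\cc$, whereas the paper derives $e=mce$ in $S$ and uses additive cancellativity.
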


\begin{proof}
  Since $S$ is bi-UF, the additive monoid $(S,+)$ is factorial. Because $(S,+)$ is also reduced and has finite rank, it is a finite-rank free commutative monoid by~\cite[Theorem~1.2.2]{GH06}. The preceding theorem now gives the desired representation as a finite $\nn_0$-span inside a number field.

  If the rank is $1$, let $e$ be the additive generator of $S$. Since $1 \in S$, we can write $1=me$ for some $m\in\nn$. Also $e^2=ce$ for some $c\in\nn_0$. Then
  \[
    e=1e=(me)e=m e^2=mc e.
  \]
  By cancellativity, $mc=1$, so $m=c=1$. Hence $e=1$, and therefore $S=\nn_0$.
\end{proof}

Thus, we can significantly narrow the scope of the Bi-UF Complex Conjecture for a natural class of semidomains as Theorem~\ref{thm:structural result} establishes a concrete structural reduction: to prove the Bi-UF Complex Conjecture for the class of complex semidomains with reduced, finite-rank additive monoids, it suffices to demonstrate that the multiplicative monoid of any such finitely generated $\mathbb{N}_{0}$-span is not factorial whenever $n > 1$. Viewed through this finite-rank framework, the results established for positive quadratic monogenic semidomains in Section~\ref{sec:quadratic monogenic semidomains} resolve the foundational $n=2$ case.

\bigskip
\section{On the Bi-HF Property}
\label{sec:Bi-HF Property}

Recall that a semiring is a bi-HFS or satisfies the bi-HF property if both its additive and multiplicative monoids are HFMs. In~\cite{BCG21}, where the Bi-UF Positive Conjecture was originally posed, the authors also asked whether $\nn_0$ is the only positive semiring that satisfies the bi-HF property. The question was answered by Gonzalez, Polo, and Rodriguez~\cite[Example 4.6]{GPR24}, who constructed a rank-$2$ semidomain with the bi-HF property. In this final section, we construct two classes of bi-HFS, one of them based on the construction provided in~\cite{GPR24}.
\smallskip

Let us start by proving that the only positive rational semidomain whose additive and multiplicative monoids are both HFMs is the prototypical semidomain $\nn_0$.

\begin{prop} \label{prop:rational semidomains}
  The only positive rational semidomain that is a bi-HFS is $\nn_0$.
\end{prop}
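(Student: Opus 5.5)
The plan is to show that additive half-factoriality alone already forces $S=\nn_0$, so the multiplicative hypothesis will not be needed. Let $S \subseteq \qq_{\ge 0}$ be a positive rational semidomain such that $(S,+)$ is an HFM. Since $S$ consists of nonnegative rationals, $(S,+)$ is reduced. Since $1 \in S$ and $(S,+)$ is atomic, $(S,+)$ has at least one atom.

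The first step is to pass from half-factoriality to an additive length function. Define $\ell \colon S \to \nn_0$ by letting $\ell(s)$ be the common length of all factorizations of $s$ in $(S,+)$, with $\ell(0)=0$. This is well defined because $(S,+)$ is an HFM. Concatenating factorizations gives $\ell(s+t)=\ell(s)+\ell(t)$, so $\ell$ is a monoid homomorphism. It therefore extends uniquely to a group homomorphism $\widetilde{\ell} \colon \gp(S) \to \zz$ by setting $\widetilde{\ell}(s-t)=\ell(s)-\ell(t)$; well-definedness follows from additivity and cancellativity. Here $\gp(S)=\{s-t : s,t\in S\}$ is a subgroup of $(\qq,+)$.

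The key step is that every group homomorphism $\varphi$ from a subgroup $G$ of $\qq$ into $\zz$ is multiplication by a constant. I would prove this as follows. For nonzero $x,y\in G$, there are nonzero integers $n,m$ with $nx=my$. Then $n\varphi(x)=m\varphi(y)$, so $\varphi(x)/x=\varphi(y)/y$. Applying this with $\varphi=\widetilde{\ell}$ gives a rational $\lambda$ with $\ell(s)=\lambda s$ for all $s\in S$. Every atom $a$ satisfies $\ell(a)=1$, so $\lambda a=1$ and $\lambda\ne 0$. Hence there is exactly one atom, namely $e:=1/\lambda$. Atomicity then gives $S=\nn_0 e$.

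The final step repeats the rank-one argument from the corollary following Theorem~\ref{thm:structural result}. Write $1=me$ and $e^2=ce$ with $m\in\nn$ and $c\in\nn_0$. Then $e=me^2=mce$, so $mc=1$, which gives $e=1$ and $S=\nn_0$. Conversely, $\nn_0$ is a bi-UFS and hence a bi-HFS, which completes the characterization.

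I expect the main obstacle to be only making the extension $\widetilde{\ell}$ rigorous and handling the rank-one homomorphism lemma cleanly; both are routine. There is also the subtlety that the result is stronger than stated: any positive rational semidomain that is merely additively half-factorial is already $\nn_0$. I would note this in a remark, since it makes the multiplicative half-factoriality hypothesis superfluous.
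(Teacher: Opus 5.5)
Your proof is correct, but it is organized differently from the paper's. The paper argues by contradiction. It picks $q=a/b\in S\setminus\nn_0$ in lowest terms with $b>1$ and uses the additive relation $aq^n=bq^{n+1}$ to get $a\ell_n=b\ell_{n+1}$ for the lengths $\ell_n$ of $q^n$. Iterating gives $a^k\ell_0=b^k\ell_k$, so $b^k\mid\ell_0$ for every $k$, which is absurd. You instead prove the purely additive statement that the length function is proportional to the value. Hence $(S,+)$ has a single atom $e$ and $S=\nn_0 e$, and only then do you use the semiring structure, through $1,e^2\in S$, to force $e=1$. Both arguments rest on the same mechanism: any two nonzero elements of $S$ are commensurable, and half-factoriality turns $nx=my$ into $n\ell(x)=m\ell(y)$. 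The paper applies this along the geometric sequence $(q^n)$, so it uses the multiplicative closure of $S$ throughout. Your route isolates a more general fact, namely that every nonzero half-factorial submonoid of $(\qq_{\ge 0},+)$ is cyclic and hence factorial, and it uses multiplication only once, at the end. One simplification: the extension of $\ell$ to $\gp(S)$, which you flag as the main technical point, is unnecessary. Since $S\setminus\{0\}\subseteq\qq_{>0}$, for any nonzero $x,y\in S$ you can choose positive integers $n,m$ with $nx=my$ directly in $(S,+)$. Half-factoriality then already gives $n\ell(x)=m\ell(y)$, and applying this to two atoms shows they coincide.
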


\begin{proof}
  It suffices to show that if a positive rational semidomain is additively half-factorial then it is $\nn_0$. Let $S$ be a positive rational semidomain that is additively half-factorial. Now suppose, towards a contradiction, that $S \neq \nn_0$. Then there is an element $q \in S \setminus \nn_0$. Write $q=a/b$ for some relatively prime $a,b \in \nn$ with $b>1$. Since $S$ is closed under multiplication, $q^n \in S$ for every $n \in \nn_0$.

  For each $n \in \nn_0$, let $\ell_n$ denote the length of an additive factorization of $q^n$ in~$S$. This is well defined because $S$ is additively half-factorial. From the equality $a q^n = b q^{n+1}$ in the additive monoid of~$S$, and from half-factoriality, we obtain $a\ell_n = b\ell_{n+1}$ for every $n \in \nn_0$. Iterating this equality gives
  \[
    a^k \ell_0 = b^k \ell_k
  \]
  for every $k \in \nn_0$. Since $\gcd(a,b)=1$, it follows that $b^k$ divides $\ell_0$ for every $k \in \nn_0$, which is impossible because $b>1$ and $\ell_0$ is a positive integer. Hence no such $q$ exists, and so $S=\nn_0$.
\end{proof}

\medskip
\subsection{A Class of Bi-HF Semidomains from a Pullback Construction}

The first bi-HF semiring distinct from~$\nn_0$ was constructed in~\cite[Example~4.6]{GPR24}. We proceed to show a pullback construction that generalizes the example given there.
\smallskip



\begin{prop}\label{prop:pullback-bi-HFS}
  Let $R$ be an atomic half-factorial domain, and let $\varphi \colon R \to \zz$ be a unital ring homomorphism. Set
  \[
    S_\varphi := \{0\} \cup \{r \in R : \varphi(r) \in \nn\}.
  \]
  Then $S_\varphi$ is a bi-HFS\@.
\end{prop}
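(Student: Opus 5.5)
The plan is to treat the two monoids of $S_\varphi$ separately, using the map $\varphi$ to control both. First I will check that $S_\varphi$ is a semidomain. It contains $0$ and $1$, since $\varphi(1)=1$. It is closed under addition and multiplication because $\nn$ is closed under both operations and $\varphi$ is a ring homomorphism. It sits inside the domain $R$. Note also that every nonzero $r \in S_\varphi$ satisfies $\varphi(r) \ge 1$.

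For the additive monoid, the restriction of $\varphi$ gives a monoid homomorphism $(S_\varphi,+) \to (\nn_0,+)$ whose only preimage of $0$ is $0$. In particular $(S_\varphi,+)$ is reduced.
\begin{itemize}
  \item \emph{Atoms.} I claim the additive atoms are exactly the elements $s$ with $\varphi(s)=1$. Such an element cannot be written as a sum of two nonzero elements, since each summand would contribute at least $1$ to $\varphi$.
  \item \emph{Elements with $\varphi(s)\ge 2$ are not atoms.} If $\varphi(s)=k\ge 2$, then $s-(k-1)\in S_\varphi$ has $\varphi$-value $1$. Hence
  \[
    s=(s-(k-1))+\underbrace{1+\dots+1}_{k-1}
  \]
  is a factorization of $s$ into $k$ atoms.
  \item \emph{Half-factoriality.} Every additive factorization of $s$ has length $\varphi(s)$, because each atom has $\varphi$-value $1$. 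So $(S_\varphi,+)$ is atomic and half-factorial.
\end{itemize}

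For the multiplicative monoid $S_\varphi^*=\{r\in R:\varphi(r)\in\nn\}$, I will compare it with $R^*$ in three steps.
\begin{enumerate}
  \item \emph{Units.} If $r\in S_\varphi^*$ is a unit of $R$, then $\varphi(r)\varphi(r^{-1})=1$ with $\varphi(r)\in\nn$. This forces $\varphi(r)=\varphi(r^{-1})=1$, so $r^{-1}\in S_\varphi$. Hence the units of $S_\varphi^*$ are exactly the units of $R$ lying in $S_\varphi$.
  \item \emph{Sign-adjustment observation.} Let $r\in S_\varphi^*$ and suppose $r=ab$ with $a,b\in R$. Then $\varphi(a)\varphi(b)=\varphi(r)\ge 1$, so $\varphi(a)$ and $\varphi(b)$ are nonzero integers of the same sign. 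Replacing $(a,b)$ by $(-a,-b)$ if necessary, we may assume $a,b\in S_\varphi^*$.
  \item \emph{Atoms agree.} From the two previous steps, $r\in S_\varphi^*$ is an atom of $S_\varphi^*$ if and only if it is an atom of $R$.
\end{enumerate}

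Now I will factor an arbitrary nonunit $r\in S_\varphi^*$. It is a nonunit of $R$ by step~1. Since $R$ is atomic, write $r=a_1\cdots a_k$ with each $a_i$ an atom of $R$. Each $\varphi(a_i)$ is a nonzero integer, and their product $\varphi(r)$ is positive. So the number of indices with $\varphi(a_i)<0$ is even, and I can replace each such $a_i$ by $-a_i$. This yields a factorization of $r$ in $S_\varphi^*$ with the same length $k$. Conversely, every factorization of $r$ in $S_\varphi^*$ is also a factorization in $R$ of the same length, by step~3. Therefore all factorization lengths of $r$ in $S_\varphi^*$ are lengths of factorizations in $R$, and these all coincide because $R$ is half-factorial. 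Hence $S_\varphi^*$ is an HFM, and together with the additive part $S_\varphi$ is a bi-HFS.

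The main point needing care is the sign bookkeeping in step~2 and in the final factorization argument. This is the only place where the fact that $\varphi$ lands in $\zz$, with units $\pm 1$, is really used: it guarantees that a negative $\varphi$-value can always be repaired by multiplying by $-1$.
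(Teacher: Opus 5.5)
Your proposal is correct and follows essentially the same route as the paper. Additively, the atoms are exactly the elements with $\varphi$-value $1$, so every additive factorization of $s$ has length $\varphi(s)$. Multiplicatively, the units and atoms of $S_\varphi^*$ are exactly the units and atoms of $R$ that lie in $S_\varphi$, and negative $\varphi$-values are repaired by $a \mapsto -a$, so lengths are inherited from $R$. The differences are cosmetic: you write out the full decomposition into $\varphi(s)$ additive atoms instead of using $s=(s-1)+1$, and you absorb the unit into one of the atoms instead of carrying a unit $v$ with $\varphi(v)=1$.
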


\begin{proof}
  Since $\nn$ is closed under addition and multiplication, $S_\varphi$ is a subsemiring of~$R$, and so it is a semidomain. We first consider its additive monoid. We claim that
  \[
    \mathcal{A}_+(S_\varphi) = \{s \in S_\varphi : \varphi(s) = 1 \}.
  \]
  Indeed, if $s \in S_\varphi$ and $\varphi(s)>1$, then $s=(s-1)+1$ is a decomposition into two nonzero elements of~$S_\varphi$, so $s$ is not an additive atom. Conversely, if $s=t+u$ for some nonzero $t,u \in S_\varphi$, then $\varphi(s)=\varphi(t)+\varphi(u) \ge 2$, and so no element with image~$1$ can be decomposed additively. Thus the displayed equality holds. It follows that every nonzero element $s \in S_\varphi$ has an additive factorization of length $\varphi(s)$ and that every additive factorization of~$s$ has length $\varphi(s)$. Hence $S_\varphi$ is additively half-factorial.

  It remains to prove that $S_\varphi$ is half-factorial. The group of units of $S_\varphi$ is
  \[
    S_\varphi^\times = \{u \in R^\times : \varphi(u)=1\}.
  \]
  Now let $a$ be an atom of~$R$ with $\varphi(a)>0$. If $a=bc$ for some $b,c \in S_\varphi$ then one of $b$ and $c$ is a unit of~$R$. Since its image under~$\varphi$ is a positive integer and the image of its inverse is also an integer, this unit has image~$1$ and is therefore a unit of~$S_\varphi$. Thus $a$ is an atom of~$S_\varphi^*$.

  Since $R$ is atomic, take $s \in S_\varphi^*$. Write $s=u a_1 \cdots a_n$, where $u \in R^\times$ and $a_1,\dots,a_n$ are atoms of~$R$. Since $\varphi(s)>0$, no $\varphi(a_i)$ is zero. For each $i \in \ldb 1,n\rdb$, set $b_i=a_i$ if $\varphi(a_i)>0$ and $b_i=-a_i$ otherwise. Then each $b_i$ is an atom of~$S_\varphi^*$ by the preceding paragraph, and $s=vb_1\cdots b_n$ for some unit $v \in R^\times$ satisfying $\varphi(v)=1$. Thus $v \in S_\varphi^\times$, and so $S_\varphi^*$ is atomic. 
  
  Finally, we show that every atom of~$S_\varphi^*$ is an atom of~$R$. Suppose, towards a contradiction, that $a \in \mathcal{A}(S_\varphi^*)$ and $a=bc$ for some nonunits $b,c \in R$. Since $\varphi(a)>0$, the nonzero integers $\varphi(b)$ and $\varphi(c)$ have the same sign. If they are positive, then $b,c \in S_\varphi$; if they are negative, then $-b,-c \in S_\varphi$. In either case, $a$ factors in $S_\varphi^*$ as a product of two nonunits, a contradiction. Therefore $\mathcal{A}(S_\varphi^*) \subseteq \mathcal{A}(R \setminus \{0\})$. Since $R$ is half-factorial, any two factorizations in $S_\varphi^*$ of the same element must have the same length. Hence $S_\varphi$ is half-factorial.
\end{proof}

\begin{exam}
  For every $m \in \nn$, the semiring
  \[
    S_m := \{0\} \cup \{f \in \zz[x_1,\dots,x_m] : f(0,\dots,0)>0\}
  \]
  is a bi-HFS\@. Indeed, the polynomial ring $\zz[x_1,\dots,x_m]$ is a UFD and therefore an atomic half-factorial domain, and the constant-term map $f \mapsto f(0,\dots,0)$ is a unital ring homomorphism onto~$\zz$. As a consequence, Proposition~\ref{prop:pullback-bi-HFS} applies. When $m=1$, this recovers the semiring in~\cite[Example~4.6]{GPR24}.
\end{exam}

\medskip
\subsection{A Class of Bi-HF Semidomains from Laurent Polynomials}

Throughout this last section, fix a transcendental number $t \in \rr_{>1}$, and let $x$ be an indeterminate over the field $\qq(t)$. Then let $R$ be the ring of Laurent polynomial $\zz[x^{\pm 1}]$. Then we set
\[
  S_t := \{0\} \cup \big\{ f(t) : f \in \zz[x^{\pm 1}], \ f(t)>0, \ f(1) \in \nn \big\}.
\]
We identify $R$ with its image under evaluation at~$t$. Whenever an irreducible factor of an element of $S_t^*$ is considered, we orient it so that its value at~$t$ is positive and call it \emph{good} or \emph{bad} according as its value at~$1$ is positive or negative. The parity of the bad factors is a concrete two-class block-monoid pattern; the following lemma records its realization inside~$S_t^*$.

\begin{lem}\label{lem:Laurent-bi-HFS-atoms}
  The units of $S_t^*$ are precisely the elements $t^m$ with $m \in \zz$. Up to units, the atoms of $S_t^*$ are precisely the good irreducibles and the products of two bad irreducibles.
\end{lem}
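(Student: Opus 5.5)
Since $t$ is transcendental, evaluation at~$t$ is an injective ring homomorphism $\zz[x^{\pm1}] \to \rr$. So I will regard every element of $S_t^*$ as a Laurent polynomial $f$ with $f(t)>0$ and $f(1)\in\nn$, and argue inside the UFD $R=\zz[x^{\pm1}]$. The units of $R$ are exactly $\pm x^m$ with $m\in\zz$.

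\emph{Units.} First I determine the units of $S_t^*$. Let $u\in S_t^*$ have inverse $v\in S_t^*$. Then $uv=1$ in $R$, so $u=\pm x^m$. The condition $u(t)>0$ forces the sign to be $+$, since $t>0$. Conversely, $t^m$ has value $1$ at $x=1$ and is positive at~$t$, and the same holds for its inverse $t^{-m}$; hence every $t^m$ is a unit.

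\emph{Orienting factors.} Next take $s\in S_t^*$ and factor it in $R$ as $s=\varepsilon x^m p_1\cdots p_k$, where each $p_i$ is a prime of $R$ that is not a unit. I orient each $p_i$ so that $p_i(t)>0$, absorbing the signs into $\varepsilon$; since $s(t)>0$, this gives $\varepsilon=+1$. Each $p_i(1)\neq 0$ because $s(1)\neq0$. So each $p_i$ is either good ($p_i(1)>0$) or bad ($p_i(1)<0$). Since $s(1)>0$, the number of bad factors is even.

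The key observation: an oriented element of $R$ lies in $S_t^*$ exactly when its value at~$1$ is positive, that is, exactly when it has an even number of bad prime factors. Consequently, divisibility $a\mid b$ in $S_t^*$ amounts to this: $a$ divides $b$ in $R$ (with positive orientation), and the cofactor contains an even number of bad primes. Since $t^m$ is a unit, the $x$-adic part is irrelevant up to units.

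\emph{Atoms.} With this in hand, the atom classification is parity bookkeeping.
\begin{itemize}
\item A good prime $g$ lies in $S_t^*$. Any factorization of $g$ in $S_t^*$ is a factorization in $R$, so one factor is $\pm x^m$, hence $t^m$. Thus $g$ is an atom.
\item A product $b_1b_2$ of two bad primes lies in $S_t^*$. A nonunit proper divisor would be a single $b_i$ times a unit, and this has an odd number of bad factors, so it is not in $S_t^*$. Hence $b_1b_2$ is an atom.
\item Conversely, let $a$ be an atom. If $a$ contains a good prime $g$ and $a\neq g$ up to units, then $a=g\cdot(a/g)$ with both factors in $S_t^*$, a contradiction. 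So if $a$ involves any good prime, it is just that prime. If $a$ has only bad primes, their number is even and at least $2$. If it is at least $4$, splitting off two of them gives a nontrivial factorization. So exactly two remain.
\end{itemize}

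The only point needing care is to confirm the orientation conventions: that bad primes exist at all, and that for a prime $p$ of $R$ the values $p(t)$ and $p(1)$ are nonzero, which comes from transcendence and irreducibility. The main obstacle is really just the identification of membership in $S_t^*$ with the parity condition. Once that is stated cleanly, the remaining steps are routine.
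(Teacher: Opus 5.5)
Your proof is correct and follows the paper's argument: you get the units from injectivity of evaluation at $t$ and the units $\pm x^m$ of $R$, then orient the prime factors and reduce membership in $S_t^*$ to the parity of bad factors; the only difference is that you verify the atom classification of the $C_2$ zero-sum pattern by hand rather than citing the block-monoid result. One small slip in your closing remark: $p(1)\neq 0$ does not follow from irreducibility (the prime $x-1$ vanishes at $1$), but the reason you gave earlier is the correct one, namely that each $p_i$ divides $s$ and $s(1)\neq 0$.
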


\begin{proof}
  If $f(t)g(t)=1$ with $f(t),g(t) \in S_t^*$, then injectivity of evaluation at the transcendental number~$t$ gives $fg=1$ in~$R$. The units of~$R$ are $\pm x^m$ with $m \in \zz$, and the negative sign is excluded by positivity at both $t$ and~$1$. Hence the units of~$S_t^*$ are exactly the elements $t^m$ with $m \in \zz$. Now take $f(t) \in S_t^*$ and write
  \[
    f=x^m p_1 \cdots p_r
  \]
  in the Laurent UFD~$R$, orienting each nonunit irreducible factor so that $p_i(t)>0$. Since $f(1)>0$, no $p_i(1)$ is zero, and the number of bad factors is even. Every divisor of $f(t)$ in~$S_t^*$ gives, again by injectivity of evaluation at~$t$, a Laurent-polynomial divisor of~$f$. Hence, up to a unit, it is represented by a subproduct of the factors~$p_i$. Such a subproduct belongs to~$S_t^*$ precisely when it contains an even number of bad factors, because all $p_i(t)$ are positive and the units $t^m$ have value~$1$ at~$1$. Thus the divisors of~$f(t)$, modulo units, are identified with the zero-sum submonoid obtained by sending each good factor to~$0$ and each bad factor to the nonzero element of~$C_2$. This is the standard block-monoid picture for zero-sum sequences~\cite[Section~2]{GK21}, and over~$C_2$ its atoms are exactly the single zero terms and the pairs of nonzero terms. Translating back gives the stated classification.
\end{proof}

\begin{thm}\label{thm:Laurent-bi-HFS-counterexample}
  For any transcendental $t \in \rr_{>1}$, the semidomain $S_t$, which is different from $\nn_0$, is a bi-HFS.
\end{thm}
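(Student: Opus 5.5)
We split the argument into three parts: $S_t$ is a semidomain different from~$\nn_0$, it is additively half-factorial, and it is multiplicatively half-factorial.

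\textbf{Semidomain.} Both conditions $f(t)>0$ and $f(1)\in\nn$ are preserved under sums and products. Also, $0$ and $1$ lie in $S_t$. Hence $S_t$ is a subsemiring of $\rr$, and so it is a positive semidomain. It differs from $\nn_0$ because the transcendental number $t$ lies in $S_t$.

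\textbf{Additive half-factoriality.} We mimic the argument of Proposition~\ref{prop:pullback-bi-HFS}, with the evaluation $\varphi \colon f(t)\mapsto f(1)$ playing the role of the homomorphism. This map is well defined because evaluation at the transcendental $t$ is injective on $R$. It is additive, and every nonzero element of $S_t$ has positive image. We claim that the additive atoms are exactly the elements $s$ with $\varphi(s)=1$, and we prove the two inclusions separately.
\begin{itemize}
  \item If $\varphi(s)=1$, then any decomposition $s=u+v$ into nonzero summands would give $\varphi(s)\ge 2$. So no nontrivial decomposition exists.
  \item If $\varphi(s)\ge 2$, we need to split off an element of image~$1$. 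The candidate $s-1$ may fail positivity at $t$, so instead we pick $\varepsilon = f_0(t)$ with $f_0(1)=1$ and $0<f_0(t)<s$. One choice is $f_0=x^{-N}$ with $N$ large, since $t>1$ gives $t^{-N}\to 0$. Then $s-\varepsilon$ has positive value at $t$ and value $\varphi(s)-1\ge 1$ at~$1$, so $s=\varepsilon+(s-\varepsilon)$ is a nontrivial decomposition in $S_t$.
\end{itemize}
By induction on $\varphi(s)$, every nonzero $s$ factors into exactly $\varphi(s)$ additive atoms. Since every additive factorization of $s$ has length $\varphi(s)$, the additive monoid is an HFM.

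\textbf{Multiplicative half-factoriality.} This is the substantive part, and Lemma~\ref{lem:Laurent-bi-HFS-atoms} does most of the work. Take $s=f(t)\in S_t^*$ and write $f=x^m p_1\cdots p_r$ with oriented irreducibles in the Laurent UFD $R$. Let $g$ be the number of good factors and $2b$ the number of bad ones. By the lemma, any factorization of $s$ into atoms of $S_t^*$ uses each good $p_i$ once as its own atom and pairs up the bad ones. This gives existence of factorizations, hence atomicity.

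Uniqueness of length uses unique factorization in $R$. Any factorization of $s$ into atoms of $S_t^*$ lifts, after orienting, to a factorization of $f$ in $R$. By uniqueness in $R$, this lifted factorization has the same multiset of irreducibles up to sign and units. Because every irreducible factor is oriented to be positive at $t$, the good/bad status of each factor is intrinsic, so the counts $g$ and $2b$ are the same for every factorization. Therefore every factorization in $S_t^*$ has length $g+b$. This is the standard fact that the block monoid over $C_2$ is half-factorial.

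The main obstacle is this last step. We must check that the lemma's identification of atoms is compatible with lengths: the good/bad status must be well defined up to associates, and the lifting to $R$ must respect it. Orientation positive at $t$ and units $t^m$, which have value~$1$ at $x=1$, settle both points. The rest is bookkeeping.
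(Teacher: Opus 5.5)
Your proposal is correct and follows the paper's proof in all essentials. Additive length is read off from $f(1)$, with atoms split off using small powers $t^{-N}$: you peel them off one at a time by induction, while the paper removes $n-1$ copies at once. Multiplicative length is the number of good irreducibles plus half the number of bad ones, by Lemma~\ref{lem:Laurent-bi-HFS-atoms}. Your remark that injectivity of evaluation at $t$ makes $f(t)\mapsto f(1)$ well defined is a detail the paper leaves implicit.
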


\begin{proof}
  Let $S = S_t$. Then $S$ is a subsemiring of~$\rr_{\ge 0}$ containing $0$ and~$1$: if $f(t),g(t) \in S^*$, then $(f+g)(t)>0$, $(fg)(t)>0$, $(f+g)(1)=f(1)+g(1)>0$, and $(fg)(1)=f(1)g(1)>0$. Also $S \cap \qq=\nn_0$. Indeed, if $f(t)=q \in \qq$, then $f-q$ vanishes at the transcendental number~$t$. After clearing denominators and multiplying by a suitable power of~$x$, this gives the zero polynomial. Thus $f=q$ is constant, and the defining conditions force $q \in \nn_0$.

  For $f(t) \in S^*$, set $\lambda(f(t)):=f(1)$. This map is additive. Every element with $\lambda=1$ is an additive atom, since a nontrivial decomposition would give $1=g(1)+h(1)$ with $g(1),h(1) \in \zz_{>0}$. Conversely, if $f(1)=n>1$, choose $N<0$ so that $(n-1)t^N<f(t)$, and set $a(x):=f(x)-(n-1)x^N$. Then $a(t)>0$ and $a(1)=1$, so $a(t) \in S$ is an additive atom, while $t^N \in S$ is also an additive atom. Hence
  \[
    f(t)=a(t)+\underbrace{t^N+\cdots+t^N}_{n-1 \text{ copies}}.
  \]
  Thus every nonzero element factors additively into atoms, and every additive factorization of~$f(t)$ has length $f(1)$. Hence $S$ is additively half-factorial.

  It remains to prove that $S$ is half-factorial. Factor a nonzero representative $f \in R$ as above. By Lemma~\ref{lem:Laurent-bi-HFS-atoms}, every multiplicative factorization of~$f(t)$ into atoms has length equal to the number of good irreducible factors of~$f$ plus one half the number of bad irreducible factors of~$f$, counted with multiplicity. This length is independent of the factorization, so $S$ is half-factorial. Finally, $t,t^{-1} \in S$, so $t$ is a nontrivial multiplicative unit. Since $t$ is transcendental, $S \neq \nn_0$.
\end{proof}

\bigskip
\section*{Acknowledgments}

During the preparation of this paper, the authors were participants of CrowdMath. They would like to thank MIT PRIMES and the AoPS for providing this forum and stimulating mathematical research. The authors thank Dr.\ Marly Gotti and Dr.\ Harold Polo for carefully proofreading early versions of this paper. Finally, the first author kindly acknowledges partial support from the NSF under award DMS-2213323.

\bigskip

\end{document}